\newtheorem{theorem}{Theorem}[section]
\newtheorem{lemma}[theorem]{Lemma}
\newtheorem{corollary}[theorem]{Corollary}
\newtheorem{proposition}[theorem]{Proposition}
\theoremstyle{definition}
\newtheorem{definition}[theorem]{Definition}
\theoremstyle{remark}
\newtheorem{remark}[theorem]{Remark}
\numberwithin{equation}{section}
\newcommand\bE{{\mathbb E}}
\newcommand\cA{\mathcal{A}}
\newcommand\cF{\mathcal{F}}
\newcommand\bP{{\mathbb P}}
\newcommand\bR{{\mathbb R}}
\newcommand\cP{\mathcal{P}}
\newcommand\cB{\mathcal{B}}
\newcommand\cM{\mathcal{M}}
\newcommand{\dt}{\partial_t}
\newcommand{\dss}{\partial_{ss}}
\newcommand\tP{{\widetilde{\mathbb P}}}
\newcommand{\ds}{\partial_s}
\newcommand{\dy}{\partial_y}
\newcommand{\dyy}{\partial_{yy}}
\newcommand{\dx}{\partial_x}
\newcommand{\dxx}{\partial_{xx}}
\newcommand{\vd}{\mathrm{d}}
\newcommand\cN{\mathcal{N}}
\newcommand\half{\frac{1}{2}}
\newcommand{\beq}{\begin{eqnarray}}
\newcommand{\enq}{\end{eqnarray}}
\newcommand{\ben}{\begin{eqnarray*}}
\newcommand{\enn}{\end{eqnarray*}}
\begin{document}

\title{Geometric martingale Benamou--Brenier transport and geometric Bass martingales}

\author{Julio Backhoff\footnote{University of Vienna, Austria}, Gr\'egoire Loeper\footnote{BNP Paribas, France}, Jan Ob\l\'oj\footnote{Mathematical Institute and St John's College, University of Oxford, UK}}

\maketitle




\abstract{
Bass martingales appear as solutions to the martingale version of the Benamou--Brenier optimal transport formulation. They are continuous martingales on $[0,1]$, with prescribed initial and terminal distributions, which are as close to Brownian motion as possible: their quadratic variation is as close as possible to being linear in the averaged $L^2$ sense. We develop here their geometric counterparts, which track the geometric Brownian motion instead: the quadratic variation of their logarithm is as close as possible to being linear. 
By analogy between the Bachelier and the Black-Scholes models in mathematical finance, the newly obtained \emph{geometric Bass martingales} have the potential to be of more practical importance in a number of applications. 

Our main contribution is to exhibit an explicit bijection between geometric Bass martingales and their arithmetic counterparts. 
This allows us, in particular, to translate fine properties of the latter into the new geometric setting. We obtain an explicit representation for a geometric Bass martingale for given initial and terminal marginals, we characterise it as a solution to an SDE, and we show that geometric Brownian motion is the only process which is both an arithmetic and a geometric Bass martingale. Finally, we deduce a dual formulation for our geometric martingale Benamou--Brenier problem. Our main proof is probabilistic in nature and uses a suitable change of measure, but we also provide PDE arguments relying on duality.\footnote{This research was funded
in part by the Austrian Science Fund (FWF) DOI 10.55776/P36835.}
}


\section{Introduction}

Optimal transport (OT) refers to a wide range of problems all concerned with constructing couplings of measures with optimal properties. It is an area of mathematics with a long history, finding applications across a wide spectrum of fields.

A particularly interesting way of constructing optimal couplings derives from a fluid mechanics perspective, known as the Benamou--Brenier formulation of OT: consider a fluid moving with time, driven by an unknown velocity field,  starting with a given mass distribution $\nu_0$. The problem is to find  the velocity having the smallest average kinetic energy (i.e., least action) such that the  final distribution
of mass is $\nu_1$. Mathematically this amounts to solving the problem
\begin{align}\label{eq:ambb_speed}
\textstyle
\inf_{\substack{X_0\sim \nu_0, X_1\sim \nu_1\\ X_t=X_0+\int_0^t V_u du}} 
\bE \left[\int_0^1|V_t|^2dt\right].
\end{align}
Remarkably, this time-continuous problem is in fact equivalent to a static one of minimising the average squared distance among couplings of $\nu_0$ and $\nu_1$. In this way, the marginal distributions $\{\text{Law}(X_t)\}_{t\in [0,1]}$ of the continuously moving fluid trace the geodesic connecting $\nu_0$ and $\nu_1$ in the space of measures, endowed with the celebrated 2-Wasserstein distance. 

More recently, \cite{BaBeHuKa20} considered the martingale analogue of this problem, which when specialized to the one-dimensional setting becomes: 
\begin{align}
\textstyle
\label{eq:ambb}\tag{A-mBB} 
\inf_{\substack{M_0\sim \nu_0, M_1\sim \nu_1\\ M_t=M_0+\int_0^t \Sigma_u dB_u}} 
\bE \left[\int_0^1(\Sigma_t - \bar\Sigma)^2dt\right],
\end{align}
where the optimisation is taken over filtered probability spaces with a Brownian motion $(B_t)_{t\geq 0}$, and where $B_0$ does not need to be a constant. By classical results, continuous {real-valued} martingales are all time-changes of a Brownian motion and are thus fundamentally characterised by their quadratic variation. Here, $\nu_0,\nu_1$ are given, in convex order and with finite second moments. Problem \eqref{eq:ambb} looks for a particle evolving as a martingale which is the closest to 
a constant $\bar\Sigma>0$ multiple of a Brownian motion. This problem has been dubbed the martingale Benamou--Brenier problem in \cite{BaBeHuKa20}. To stress that the reference process is the \emph{arithmetic} Brownian motion, we refer to it as the \emph{arithmetic}-mBB problem. 
This problem was studied in detail in \cite{BaBeHuKa20,BaBeScTs23}, who show in particular that \eqref{eq:ambb} admits a unique (in distribution) optimiser, known as the \emph{stretched Brownian motion} from $\nu_0$ to $\nu_1$. Under a mild regularity assumption, known as irreducibility and explained in section \ref{sec:prelim} below, the optimiser is further shown to be a \emph{Bass martingale}. This means in particular that $M_t=f(t,B_t)$, for all $t$, where $f(t,\cdot)$ is non-decreasing; see Definition \ref{def:Bassmg} below. Further, they also show that this continuous time problem is equivalent to a static \emph{weak}-OT problem in the sense of \cite{GoRoSaTe14}. We refer to \cite{Ba83} for a classical application of Bass martingales to the Skorokhod embedding problem and to \cite{PaRoSc22} for a much more recent application concerning Kellerer's theorem.

Martingales, and diffusion processes in particular, are a backbone of mathematical finance: they describe the dynamics of risky assets under a pricing measure. Selecting a model involves its calibration, a process ensuring that model prices match the observed market prices. The canonical example of a calibration problem is that of matching European call and put prices. This, via the classical argument of \cite{BrLi78}, is equivalent to matching marginal distributions at some fixed times, the maturities. Finding robust bounds on prices of an exotic option then corresponds to minimising, or maximising, the expectation of a certain path functional over all such martingales. Starting with \cite{Ho98a} this observation underpinned new interplay between Skorokhod embeddings and robust finance, e.g., \cite{BrHoRo01a,CoOb11a}, and subsequently led to the introduction of \emph{martingale optimal transport} in \cite{PHB,GaLaTo} and the ensuing rapid and rich growth of this field. More recently, optimal transport techniques have also been used as means for non-parametric calibration: OT is used as a means to project one's favourite model onto the set of calibrated martingales, i.e., martingales which satisfy a set of given distributional constraints, see \cite{guo2022calibration,guo2022joint}. 
In general, this OT-calibration problem is solved via its dual, numerically optimizing over solutions to a non-linear PDE, which can be challenging. The Bass martingale can be seen as a particular case of the OT-calibration problem, but one which can be reduced to a static problem, and is hence much easier to solve; see section \ref{sec:numerics} below. The main drawback of the resulting solution is that the \emph{arithmetic} Brownian motion is not a desirable model for risky assets. 
Instead, the \emph{geometric} Brownian motion is, and in finance one quantifies the variability of a model using the quadratic variation of the logarithm of the price process. 

Motivated by the above remarks, we consider a \emph{geometric} version of the martingale Benamou--Brenier problem. We suppose $\mu_0,\mu_1$ are supported on $(0,+\infty)$ and study the problem
\begin{align}
\label{eq:gmbb}\tag{G-mBB} 
\mathbf{GmBB}_{\mu_0,\mu_1}&= 
\inf_{\textstyle\substack{S_0\sim \mu_0, S_1\sim \mu_1\\ S_t=S_0+\int_0^t \sigma_u S_u dB_u}} \bE \left[\int_0^1(\sigma_t - \bar\sigma)^2dt\right],
\end{align}
where the optimisation is taken over filtered probability spaces $(\Omega,\cF,(\cF_t)_{t\geq 0},\bP)$ with a Brownian motion $(B_t)_{t\geq 0}$, and $\bar \sigma>0$. We will show that a suitable change of measure argument allows to build a 1-1 relationship between \eqref{eq:gmbb} for $(\mu_0,\mu_1)$ and the \eqref{eq:ambb} for a different set of marginals $(\nu_0,\nu_1)$. This, in particular, yields uniqueness (in distribution) of the solution to \eqref{eq:gmbb}. 
{We will present a probabilistic proof of the equivalence between the two problems, as well as a PDE motivation behind it. Furthermore, the 1-1 relationship between arithmetic and geometric problems means that we can efficiently translate  results from the arithmetic setting into the geometric one. 
For instance \cite{BaBeHuKa20,BaBeScTs23} explore in detail the fine structure of the optimiser in \eqref{eq:ambb}, \cite{CoHe21,AcMaPa23,JoLoOb23} propose a numerical method for it, and \cite{BeJoMaPa21b} establish its stability w.r.t.\ perturbation of the data of the problem. We will show in  section \ref{sec:gBass structure} what this implies for the structure of our geometric optimiser  and we will explain in  section \ref{sec:numerics} what a numerical method for the geometric setting looks like.

The idea of transforming a martingale transport problem into another via a change of measure, but in discrete time, was pioneered by Campi, Laachir and Martini \cite{CaLaMa14}. This is a particular case of a \textit{change of numeraire} argument. We will see how the same idea is fruitful in our continuous time setting as well.\footnote{
Note added in revision: independently from us, Beiglb\"ock,  Pammer and Riess \cite{BPR:24} have recently used similar techniques to tackle \eqref{eq:gmbb} and related weak optimal transport problems.} 
}

\section{Preliminaries}
\label{sec:prelim}
We let $\bR_+=(0,+\infty)$ and denote $\cP_{r,p}(\bR_+)$ for $r<0<p$ the set of probability measures on $\bR_+$ for which $\int_0^\infty |x|^s\mu(dx)<\infty$ for $r\leq s\leq p$. 
The pus-forward operator is denoted with a subscript $\#$, i.e., for a function $G:\bR_+\to \bR_+$ and probability measures $\mu$ and $\nu$, we write 
$$
G_\# \mu=\nu \Leftrightarrow \nu=\mu\circ G^{-1}\Leftrightarrow  \forall \Gamma\in \cB(\bR_+), \ \nu(\Gamma)=\mu(G^{-1}(\Gamma)).
$$
For a $\mu$-integrable $f:\bR_+\to \bR_+$, we consider the $f$-reflected measure 
$$ \textstyle f_\dag \mu = \left(y\to \frac{1}{f(y)}\right)_\# \left(\frac{f(y)}{\int f(x)\mu(dx)}\mu(dy)\right),$$
i.e., we define a new probability measure with density proportional to $f$ w.r.t.\ $\mu$ and then consider its push-forward using $1/f$. We will be mostly interested in the case $f(y)=y$, in which case the measure resulting from the above is denoted $Id_\dag\mu$. Note that $Id_\dag\cdot$ reflects moments and is an involution: $Id_\dag(Id_\dag \mu)=\mu$ for $\mu \in\cP_{-1,2}(\bR_+)$.

It is immediate that \eqref{eq:gmbb} is invariant under multiplicative scaling: the value remains the same and the optimisers are constant multiples of each other (Remark \ref{rk:mean1enough}). It is thus useful to normalise probability measures and for a $\mu\in \cP_{0,1}(\bR_+)$ we let $\tilde \mu:=(x\to x/m)_\#\mu$, where $m=\int x\mu(dx)$. In this way, $\int x\tilde \mu(dx)=1$. We let $$Id_\ddag \mu := Id_\dag \tilde \mu=\widetilde{Id_\dag \mu}.$$


We say that $\eta,\rho\in \cP_{0,1}(\bR_+)$ are in convex order, and write $\eta\preccurlyeq_{cx}\rho$, if $\int f(x)\eta(dx)\leq \int f(x)\rho(dx)$ for all convex functions $f:\bR_+\to \bR_+$. In fact, working on $\bR$, we have $\eta\preccurlyeq_{cx}\rho$ if and only if $U_\eta\leq U_\rho$, pointwise on $\bR$, where $U_\rho(z):=\int_{\mathbb R}|x-z|\rho(dx)$ is known as the potential of $\rho$. 
As potential functions are continuous, the set $\{U_\eta < U_\rho\}$ is open and hence equal to an at most countable union of open maximal intervals. We write $\mathcal I_{[\eta,\rho]}$ for the collection of these intervals and notice that each such interval is contained in $\bR_+$. We say that $\eta,\rho$ are irreducible if $\mathcal I_{[\eta,\rho]}$ contains a single interval $I$. In this case, w.l.o.g., we may and will assume that $\eta(\bR\setminus I)=0$. We refer to \cite{Ob04,BeJu16} for further details on potentials and their applications in Skorokhod embeddings and in martingale optimal transport. 

\begin{lemma}\label{lem:bijection}
Let $\mu_0,\mu_1\in \cP_{-1,1}(\bR_+)$ and $\nu_i:=Id_\dag\mu_i$, $i=0,1$. Then $\nu_0,\nu_1\in \cP_{0,2}(\bR_+)$  and $$\mu_0\preccurlyeq_{cx}\mu_1 \iff \nu_0\preccurlyeq_{cx}\nu_1.$$ 
In case $\mu_0\preccurlyeq_{cx}\mu_1$, we  have 
$I\in \mathcal I_{[\nu_0,\nu_1]} \iff \left\{\frac{1}{x}:x\in I\right\}\in\mathcal I_{[\mu_0,\mu_1]},$ 
and conversely $J\in \mathcal I_{[\mu_0,\mu_1]} \iff \left\{\frac{1}{x}:x\in J\right\}\in\mathcal I_{[\nu_0,\nu_1]}$. 
\end{lemma}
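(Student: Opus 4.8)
The plan is to exploit the fact that $\nu_i = Id_\dag \mu_i$ means $\nu_i$ has density proportional to $x$ with respect to $\mu_i$, followed by pushforward under $x \mapsto 1/x$. Concretely, writing $m_i = \int x\,\mu_i(dx)$, we have $\nu_i = (x\mapsto 1/x)_\# \big(\tfrac{x}{m_i}\mu_i(dx)\big)$, so for any test function $g$, $\int g(y)\,\nu_i(dy) = \tfrac{1}{m_i}\int g(1/x)\, x\,\mu_i(dx)$. Applying this to $g(y) = y$ and $g(y) = y^2$ immediately gives $\int y\,\nu_i(dy) = \tfrac{1}{m_i}\int 1\,\mu_i(dx) = \tfrac{1}{m_i} < \infty$ and $\int y^2\,\nu_i(dy) = \tfrac{1}{m_i}\int \tfrac{1}{x}\,\mu_i(dx) < \infty$ since $\mu_i \in \cP_{-1,1}(\bR_+)$; together with positivity of the support this shows $\nu_i \in \cP_{0,2}(\bR_+)$. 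First I would record these identities, noting also that $Id_\dag$ preserves the total mass $1$.

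Next I would establish the convex-order equivalence. The clean way is to relate the potential functions $U_{\nu_i}$ and $U_{\mu_i}$ via an explicit change of variables. Using the formula above with $g(y) = |y - z|$ for $z > 0$, one gets $U_{\nu_i}(z) = \tfrac{1}{m_i}\int |1/x - z|\, x\,\mu_i(dx) = \tfrac{z}{m_i}\int |1/z - x|\,\mu_i(dx) = \tfrac{z}{m_i} U_{\mu_i}(1/z)$. The subtle point is that $m_0 = m_1$: since $\mu_0 \preccurlyeq_{cx} \mu_1$ (or the reverse implication, which we prove symmetrically) forces equal means, the prefactor is the same for $i = 0, 1$. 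Thus $U_{\nu_0}(z) \le U_{\nu_1}(z)$ for all $z > 0$ iff $U_{\mu_0}(1/z) \le U_{\mu_1}(1/z)$ for all $z > 0$, i.e. iff $U_{\mu_0} \le U_{\mu_1}$ on $\bR_+$. Since both families of measures are supported on $\bR_+$, the potentials agree with their trivial affine behaviour off $\bR_+$ and checking the inequality on $\bR_+$ suffices for convex order; this gives $\mu_0 \preccurlyeq_{cx} \mu_1 \iff \nu_0 \preccurlyeq_{cx} \nu_1$. One must handle the direction of implication carefully: a priori we only know means are equal once we know convex order holds on one side, so I would argue: if $\mu_0 \preccurlyeq_{cx} \mu_1$ then $m_0 = m_1$, hence the potential identity with common prefactor gives $\nu_0 \preccurlyeq_{cx}\nu_1$; conversely if $\nu_0 \preccurlyeq_{cx} \nu_1$ then $\int y\,\nu_0 = \int y\,\nu_1$, i.e. $1/m_0 = 1/m_1$, so again $m_0 = m_1$ and we run the equivalence backwards.

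Finally, for the correspondence of irreducible components, I would use the same relation $U_{\nu_i}(z) = \tfrac{z}{m}\, U_{\mu_i}(1/z)$ (with $m := m_0 = m_1$). Since $z/m > 0$, we have $U_{\nu_0}(z) < U_{\nu_1}(z) \iff U_{\mu_0}(1/z) < U_{\mu_1}(1/z)$, so the set $\{U_{\nu_0} < U_{\nu_1}\} \subseteq \bR_+$ is exactly the image of $\{U_{\mu_0} < U_{\mu_1}\} \subseteq \bR_+$ under the homeomorphism $x \mapsto 1/x$ of $\bR_+$ onto itself. Because $x \mapsto 1/x$ is a decreasing homeomorphism, it maps open maximal intervals to open maximal intervals bijectively, which yields precisely $I \in \mathcal I_{[\nu_0,\nu_1]} \iff \{1/x : x \in I\} \in \mathcal I_{[\mu_0,\mu_1]}$ and the converse statement. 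I expect the main obstacle to be purely bookkeeping: being careful that the measures genuinely live on $\bR_+$ so that all the reciprocal substitutions are legitimate, that the normalising constants $m_i$ really coincide (which hinges on convex order $\Rightarrow$ equal mean), and that the potential identity is stated on $\bR_+$ where it is valid — extending naively to all of $\bR$ would be wrong since $U_{\mu_i}(1/z)$ is not defined at $z = 0$. None of these is deep, but the symmetry of the argument (the relation is essentially an involution, mirroring $Id_\dag(Id_\dag\mu) = \mu$) should be highlighted to keep the write-up short.
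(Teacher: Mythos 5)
Your proposal is correct and follows essentially the same route as the paper: the key identity $U_{\nu_i}(z)=\tfrac{z}{m_i}U_{\mu_i}(1/z)$ for $z>0$, the affine form of the potentials off $\bR_+$ to reduce convex order to the positive half-line (with $m_0=m_1$ forced by convex order on either side), and the decreasing homeomorphism $x\mapsto 1/x$ identifying the open sets $\{U_{\nu_0}<U_{\nu_1}\}$ and $\{U_{\mu_0}<U_{\mu_1}\}$ and hence their maximal intervals. Your extra care about which direction forces equal means is a slightly more explicit version of the paper's remark that the pointwise inequality of potentials on $\bR$ already entails $m_0=m_1$.
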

\begin{proof} That $\nu_0,\nu_1\in \cP_{0,2}(\bR_+)$ if $\mu_0,\mu_1\in \cP_{-1,1}(\bR_+)$, is immediate. Let $m_i=\int x\mu_i(dx)$. Now, for $z\in \mathbb R_+$, we have
    $$\textstyle U_{\nu_i}(z) = \int_{\mathbb R_+} |1/x-z|(x/m_i)\mu_i(dx)=
    \frac{z}{m_i}\int_{\mathbb R_+} |1/z-x|\mu_i(dx),$$  so $\frac{m_i}{z}U_{\nu_i}(z )=U_{\mu_i}(1/z)$. 
     As all measures involved are supported in $\mathbb R_+$, we also have $U_{\mu_i}(z)=m_i-z$ and  $U_{\nu_i}(z)=1/m_i-z$ for $z\leq 0$. It follows that $U_{\mu_0}(z)\leq U_{\mu_1}(z)$ for all $z\in \bR$ if and only if $U_{\nu_0}(z)\leq U_{\nu_1}(z)$ for all $z\in \bR$, and in this case $m_1=m_2$.
     
  All open intervals considered are likewise subsets of the positive reals. It follows,  
    for $z>0$, that $U_{\nu_1}(z)>U_{\nu_0}(z)$ if and only if $U_{\mu_1}(1/z)>U_{\mu_0}(1/z)$. This exhibits the desired bijection between $\mathcal I_{[\nu_0,\nu_1]}$ and $\mathcal I_{[\mu_0,\mu_1]}$.
\end{proof}
 We note that the same remains true if $\nu_i=Id_\ddag\mu_i$ with the only difference that 
 \begin{equation}\label{eq:components equivalence}
 \textstyle
 I\in \mathcal I_{[\nu_0,\nu_1]} \iff \left\{\frac{m}{x}:x\in I\right\}\in\mathcal I_{[\mu_0,\mu_1]},    
 \end{equation}
 where $m=\int x\mu_0(dx)=\int x \mu_1(dx)$.
 
We denote by $\gamma_s$ the centred one-dimensional Gaussian distribution with variance $s$. We use $\ast$ to denote the convolution between a function and a measure and $v^\ast$ to denote the convex conjugate (Legendre transform) of a convex function $v$. We write $MC(\eta,\rho)$ for the OT problem of maximal covariance between measures $\eta,\rho$:
$$\textstyle MC(\eta,\rho) = \sup_{\pi\in \Pi(\eta,\rho)}\int xy \pi(dx,dy),$$
where $\Pi(\eta,\rho)$ denotes measures on $\bR^2$ with marginals $\eta$ and $\rho$. This problem is naturally equivalent to the squared Wasserstein distance since
$$\textstyle 2MC(\eta,\rho)= \int x^2 (\eta(dx)+\rho(dx)) - \inf_{\pi\in \Pi(\eta,\rho)}\int (x-y)^2 \pi(dx,dy).$$

\section{Main results}
First, we make a simple observation that allows us to rewrite \eqref{eq:ambb} and \eqref{eq:gmbb} as maximisation problems. We refer the reader to e.g.\ \cite{KaSh91} for concepts and terminology from stochastic analysis. Consider a martingale $M$ admissible in \eqref{eq:ambb}. Note that its quadratic variation is given by $\langle M\rangle_t = \int_0^t\Sigma_u^2du$ and the expectation in \eqref{eq:ambb} is finite if and only if $\langle M\rangle_1$ is integrable, if and only if $M$ is a square integrable martingale and $M_t^2 - \langle M\rangle_t$, $t\in [0,1]$, is also a martingale. It follows that we then have 
$$\textstyle \bE \left[\int_0^1\Sigma_t^2dt\right] = \bE[\langle M\rangle_1]=  \bE[ M_1^2] - \bE[M_0^2] = \int x^2 d\nu_1 - \int x^2 d\nu_0.
$$
Recalling that {$\bar\Sigma >0$}, it follows that the original problem is equivalent to
\begin{equation}\label{eq:ap}\tag{AP}
\mathbf{AP}_{\nu_0,\nu_1} = \sup_{\textstyle\substack{M_0\sim \nu_0, M_1\sim \nu_1\\ M_t=M_0+\int_0^t \Sigma_u dB_u\\ M \text{ martingale}}} \bE \left[\int_0^1 \Sigma_t dt\right],
\end{equation}
in the sense that the two problems share the optimisers and the value in \eqref{eq:ambb} is equal to $\bar\Sigma^2 + \int x^2 d\nu_1 - \int x^2 d\nu_0 - 2 {\bar\Sigma} \mathbf{AP}_{\nu_0,\nu_1}$. Analogously, for a martingale $S$ admissible for \eqref{eq:gmbb} we note that 
$$\log S_t = \log S_0 + \int_0^t \sigma_u dB_u - \frac{1}{2}\int_0^t \sigma^2_u du,\quad t\in [0,1].$$
Localising so that the stochastic integral is a martingale, taking expectations and limits, we see that\footnote{{Assuming, as we will, that $\mu_i \in \cP_{-1,1}(\bR_+)$, it follows that $\int|\log(x)|d\mu_i(x)<\infty$.}}
$$ \textstyle \bE \left[\int_0^1\sigma_t^2dt\right] = 2 \bE[\log(S_0/S_1)] = 2\int \log(x) d\mu_0 - 2\int \log(x) d\mu_1,
$$
and hence \eqref{eq:gmbb} is equivalent to the following problem:
\begin{equation}\tag{GP}\label{eq:gp}
\mathbf{GP}_{\mu_0,\mu_1} = \sup_{\textstyle\substack{S_0\sim \mu_0, S_1\sim \mu_1\\ S_t=S_0+\int_0^t \sigma_uS_u dB_u \\ S \text{ martingale}}} \bE \left[\int_0^1 \sigma_t dt\right],
\end{equation}
where $(B_t)_{t\geq 0}$ is a Brownian motion on a filtered probability space $(\Omega,\cF,(\cF_t)_{t\geq 0},\bP)$. 

We can state our first main result:

\begin{theorem}\label{thm:g-a equiv}
 Let $\mu_0,\mu_1\in \cP_{-1,1}(\bR_+)$ satisfy $\mu_0\preccurlyeq_{cx} \mu_1$. Let $\nu_i=Id_{\ddag}\mu_i$, $i=0,1$. Then \eqref{eq:gp} admits a unique optimiser in distribution and we have
 $$ \mathbf{GP}_{\mu_0,\mu_1} = \mathbf{AP}_{\nu_0,\nu_1}.$$
  \end{theorem}
  In fact the proof of the above result establishes a 1-1 mapping between the optimisers to $\mathbf{GP}_{\mu_0,\mu_1}$  and to $\mathbf{AP}_{\nu_0,\nu_1}$. Using our understanding of the latter we can deduce a detailed description of the former. In the case of multiple irreducible components the full description is more involved and we present it in section \ref{sec:gBass structure}. We state here the result for the important special case of irreducible measures. The existence and structure of the optimiser in \eqref{eq:ap} used below follows from \cite[Thm.~3.1]{BaBeHuKa20} or \cite[Thm.~1.3]{BaBeScTs23}, see also Definition \ref{def:Bassmg} below. 
  
  \begin{theorem}\label{thm:gBass structure irr}
 Let $\mu_0,\mu_1\in \cP_{-1,1}(\bR_+)$ with $\mu_0\preccurlyeq_{cx} \mu_1$ be irreducible. Set $\nu_i=Id_{\ddag}\mu_i$, $i=0,1$, and let $F(1,\cdot)$ be an increasing function,  $\alpha\in\cP(\bR)$, such that $(F(t,B_t),t\in [0,1])$ is an optimiser for $\mathbf{AP}_{\nu_0,\nu_1}$, where $B$ is a Brownian motion with an initial distribution $B_0\sim \alpha$ and $F(t,\cdot)=F*\gamma_{1-t}(\cdot)$. 
   Then the distribution of the optimiser $S$ in $\mathbf{GP}_{\mu_0,\mu_1}$ is characterised by 
\begin{equation}\label{eq:simulateGBass}
    \bE\Big[g\big(\{S_t: t\in [0,1]\}\big)\Big]=\bE\Big[g\big(\{m/F(t,B_t):t\in[0,1]\}\big)\cdot F(1,B_1)\Big],
\end{equation}
 for any measurable functional $g:C([0,1];\bR)\to\bR_+$, where $m:=\int x\mu_0(dx)$.
   \end{theorem}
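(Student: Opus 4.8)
The plan is to derive \eqref{eq:simulateGBass} as a direct consequence of the change-of-measure correspondence between \eqref{eq:gp} and \eqref{eq:ap} established (or to be established) in the proof of Theorem \ref{thm:g-a equiv}. The starting point is the observation, already recorded before Lemma \ref{lem:bijection}, that both problems are invariant under multiplicative scaling, so we may normalise and work with $\nu_i = Id_\ddag \mu_i$, which are mean-one probability measures in convex order by Lemma \ref{lem:bijection}; the irreducibility of $(\mu_0,\mu_1)$ transfers to irreducibility of $(\nu_0,\nu_1)$ by the same Lemma, so that \cite[Thm.~3.1]{BaBeHuKa20} / \cite[Thm.~1.3]{BaBeScTs23} indeed furnish a Bass-martingale optimiser $M_t = F(t,B_t)$ with $F(t,\cdot) = F \ast \gamma_{1-t}$, $F(1,\cdot)$ increasing, and $B_0 \sim \alpha$.

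The key step is the explicit form of the bijection. Given the optimiser $M$ for $\mathbf{AP}_{\nu_0,\nu_1}$ on $(\Omega,\cF,(\cF_t),\bP)$, define the new measure $\bQ$ on $\cF_1$ by $\frac{\vd\bQ}{\vd\bP} = M_1 = F(1,B_1)$ (legitimate since $M$ is a nonnegative martingale with $\bE[M_1]=1$ because $\nu_1$ has mean one), and set $S_t := m/M_t = m/F(t,B_t)$. Then under $\bQ$ one checks: (i) $S_0 \sim \mu_0$ and $S_1 \sim \mu_1$ — this is exactly the content of the definition of $Id_\dag$ and the normalisation, since for a bounded test function $\varphi$ one has $\bE^\bQ[\varphi(S_1)] = \bE^\bP[\varphi(m/M_1) M_1]$, and pushing $\nu_1$ forward by $y \mapsto m/y$ after the density tilt $y\,\nu_1(\vd y)/\!\int x\,\nu_1$ recovers $\mu_1$; likewise at time $0$ using that $M_t$ is a $\bP$-martingale, so $\bE^\bP[\varphi(S_0)M_1] = \bE^\bP[\varphi(S_0)M_0] = \bE^\bP[\varphi(m/M_0)M_0]$; (ii) $S$ is a $\bQ$-local martingale of the prescribed multiplicative form $\vd S_t = \sigma_t S_t \,\vd \tilde B_t$ for a $\bQ$-Brownian motion $\tilde B$, by Itô applied to $m/M_t$ together with Girsanov (the drift produced by Itô's formula is precisely cancelled by the Girsanov drift coming from $\vd\langle M, \log M\rangle$); and (iii) the identity $\bE^\bP[\int_0^1 \Sigma_t\,\vd t] = \bE^\bQ[\int_0^1 \sigma_t\,\vd t]$ relating the two objective functionals, which gives $\mathbf{GP}_{\mu_0,\mu_1} \geq \mathbf{AP}_{\nu_0,\nu_1}$; running the construction in reverse (tilting a $\mathbf{GP}$-optimiser by $1/S_1$, suitably normalised) gives the opposite inequality and shows the map is a bijection on optimisers. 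Once this is in place, \eqref{eq:simulateGBass} is just the change-of-measure formula: for any measurable $g : C([0,1];\bR)\to\bR_+$,
\begin{equation*}
\bE^\bQ\big[g(\{S_t : t\in[0,1]\})\big] = \bE^\bP\big[g(\{m/F(t,B_t): t\in[0,1]\})\cdot F(1,B_1)\big],
\end{equation*}
and since $S$ under $\bQ$ realises the (law-unique) optimiser of $\mathbf{GP}_{\mu_0,\mu_1}$, this characterises its distribution.

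I expect the main obstacle to be the rigorous justification of item (ii): namely that $S = m/M$ is a genuine $\bQ$-martingale (not merely a local martingale) with a well-defined volatility $\sigma$, and that the change of measure is licit given only the moment assumptions $\mu_i \in \cP_{-1,1}(\bR_+)$ — equivalently $\nu_i \in \cP_{0,2}(\bR_+)$. One must ensure $M$ stays strictly positive $\bP$-a.s.\ (which follows from irreducibility: $F(t,\cdot)$ maps into the open interval $I \subset \bR_+$), control the integrability of $1/M_t$ so that $\bE^\bQ[S_t] < \infty$ and the martingale property holds, and verify that $\langle \log S\rangle_1$ has the right expectation so that the objective-functional identity in (iii) is valid. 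A secondary, more bookkeeping-type obstacle is matching the initial distributions correctly: under $\bQ$ the process $B$ is no longer Brownian from $\alpha$, so one has to argue at the level of $\bP$ using the martingale property of $M$, as sketched above, rather than directly computing laws under $\bQ$. The alternative PDE route, via the dual formulations, can be invoked to give a fully rigorous argument under additional regularity, but the probabilistic argument above is the natural one and I would present it first.
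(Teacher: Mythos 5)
Your proposal is correct and follows essentially the same route as the paper's own proof (which treats the general, possibly reducible case and notes it reduces to \eqref{eq:simulateGBass} when the marginals are irreducible): tilt the law of the arithmetic Bass optimiser by its terminal value, set $S_t=m/F(t,B_t)$, invoke the value equality and law-uniqueness from Theorem \ref{thm:g-a equiv}, and read off \eqref{eq:simulateGBass} as the change-of-measure identity. The ``main obstacle'' you flag (true martingale property of $S$ under the tilted measure and integrability of $1/M_t$) is dispatched in the paper by the elementary Bayes-rule computation as in \eqref{eq:Rmgcomp}, since $S_tM_t\equiv m$ makes both points immediate.
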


  Using  that $(F(t,B_t):t\in [0,1])$ is a martingale, for $g:\bR\to \bR_+$ we have
  $$\textstyle \bE[g(S_t)] = \bE[g(m/F(t,B_t))F(t,B_t)] = \int g\left(\frac{m}{F\ast \gamma_{1-t}(y)}\right) F\ast \gamma_{1-t}(y)(\alpha\ast\gamma_t)(dy).$$
This gives us a quick access to computations involving the distribution of $S$ once we know $\alpha$ and $F$. These, in turn, can be computed efficiently using the fixed point scheme of \cite{CoHe21} or, equivalently, the measure preserving martingale Sinkhorn algorithm in \cite{JoLoOb23}; see \cite{AcMaPa23} for the proof of convergence. We note that Theorem \ref{thm:gBass structure irr} also allows for an autonomous description of the optimiser $S$ in $\mathbf{GP}_{\mu_0,\mu_1}$ as a solution to an SDE, see Proposition \ref{prop:gBass structure}. 
 
Having the connection between geometric and arithmetic problems at hand, the following result is immediate from \cite[Thm.~1.5]{BaScTs23} and \cite[Thm.~1.4]{BaBeScTs23}.
\begin{corollary}\label{cor:duality}
 Let $\mu_0,\mu_1\in \cP_{-1,1}(\bR_+)$ such that $\mu_0\preccurlyeq_{cx} \mu_1$. Then 
 \begin{align*} \mathbf{GP}_{\mu_0,\mu_1} &= \textstyle  \inf\left\{ \int \psi d\nu_1 - \int(\psi^\ast\ast\gamma_1)^\ast d\nu_0: \, \psi\mbox{ convex}\right\} \\
 &= \sup\left\{ MC(\nu_1,\alpha\ast\gamma_1) - MC(\nu_0,\alpha)  :\, \alpha \in \cP_2(\bR)\right \},
 \end{align*}
 where $\nu_i=Id_{\ddag}\mu_i$, $i=0,1$.
\end{corollary}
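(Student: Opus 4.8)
The plan is to reduce the statement to the already established duality theory for the \emph{arithmetic} problem. By Theorem~\ref{thm:g-a equiv}, since $\mu_0\preccurlyeq_{cx}\mu_1$ and $\nu_i=Id_\ddag\mu_i$, one has $\mathbf{GP}_{\mu_0,\mu_1}=\mathbf{AP}_{\nu_0,\nu_1}$, so it suffices to prove the two displayed identities with $\mathbf{GP}_{\mu_0,\mu_1}$ replaced by $\mathbf{AP}_{\nu_0,\nu_1}$. The equality $\mathbf{AP}_{\nu_0,\nu_1}=\inf\{\int\psi\,d\nu_1-\int(\psi^\ast\ast\gamma_1)^\ast\,d\nu_0:\psi\text{ convex}\}$ is the Kantorovich-type dual of the weak optimal transport / martingale Benamou--Brenier value, while $\mathbf{AP}_{\nu_0,\nu_1}=\sup\{MC(\nu_1,\alpha\ast\gamma_1)-MC(\nu_0,\alpha):\alpha\in\cP_2(\bR)\}$ expresses the arithmetic-mBB value as a supremum over the initial law $\alpha$ of the driving Brownian motion; both are contained in \cite[Thm.~1.4]{BaBeScTs23} and \cite[Thm.~1.5]{BaScTs23}. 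Thus the chain $\mathbf{GP}_{\mu_0,\mu_1}=\mathbf{AP}_{\nu_0,\nu_1}=\inf\{\cdots\}=\sup\{\cdots\}$ closes as soon as the hypotheses of those two cited theorems are checked.

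Verifying those hypotheses is the only real work. By Lemma~\ref{lem:bijection} (and the remark following it, for $Id_\ddag$ in place of $Id_\dag$), the measures $\nu_0,\nu_1$ lie in $\cP_{0,2}(\bR_+)\subset\cP_2(\bR)$ and satisfy $\nu_0\preccurlyeq_{cx}\nu_1$; moreover $\mathbf{AP}_{\nu_0,\nu_1}$ is finite, since admissible martingales exist (as $\nu_0\preccurlyeq_{cx}\nu_1$) and any admissible $M_t=M_0+\int_0^t\Sigma_s\,dB_s$ satisfies, by Cauchy--Schwarz, $\bE[\int_0^1\Sigma_t\,dt]\le(\bE[\int_0^1\Sigma_t^2\,dt])^{1/2}=(\int x^2\,d\nu_1-\int x^2\,d\nu_0)^{1/2}<\infty$. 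One then checks that \cite[Thm.~1.4]{BaBeScTs23} and \cite[Thm.~1.5]{BaScTs23} are stated precisely for marginals in convex order with finite second moments and require no irreducibility assumption---irreducibility enters only in the finer structural statements, cf.\ Theorem~\ref{thm:gBass structure irr}---and that their sign, time-horizon and convolution conventions match the functionals $\psi\mapsto\int\psi\,d\nu_1-\int(\psi^\ast\ast\gamma_1)^\ast\,d\nu_0$ and $\alpha\mapsto MC(\nu_1,\alpha\ast\gamma_1)-MC(\nu_0,\alpha)$ displayed above (in particular the placement of $\gamma_1$, consistent with the horizon $[0,1]$, and the appearance of the convex conjugate rather than its functional inverse).

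The main obstacle is therefore purely bibliographic bookkeeping: ensuring the cited results apply verbatim, without additional regularity or irreducibility, and that their formulations---possibly phrased via the stretched Brownian motion, via $\bE[M_1B_1]$, or via an equivalent weak transport cost---are literally the two expressions in the statement. If either reference were to require irreducibility for the duality itself, one could instead combine the irreducible-component decomposition of Section~\ref{sec:gBass structure} with its arithmetic analogue, apply the duality on each component and sum, but I expect this detour to be unnecessary.
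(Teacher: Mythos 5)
Your proposal matches the paper's argument exactly: the corollary is stated there as an immediate consequence of Theorem \ref{thm:g-a equiv} combined with the cited dualities \cite[Thm.~1.5]{BaScTs23} and \cite[Thm.~1.4]{BaBeScTs23}, which is precisely your reduction. Your additional checks (convex order and second moments of $\nu_0,\nu_1$ via Lemma \ref{lem:bijection}, no irreducibility needed) are the hypotheses the paper leaves implicit, so the proposal is correct and follows the same route.
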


\begin{remark}\label{rk:mean1enough}
If $S$ is feasible for \eqref{eq:gp} for $(\mu_0,\mu_1)$, and $m=\int x\mu_0(dx)=\int x\mu_1(dx)$ then $\frac{1}{m}S$ is feasible for \eqref{eq:gp} for the measures $\tilde \mu_i = (x\to x/m)_\#\mu_i$, which have mean $1$. In particular, the value of the problem \eqref{eq:gp} is the same for $(\mu_0,\mu_1)$ and $(\tilde \mu_0,\tilde \mu_1)$ and the optimisers are constant multiples of each other. This explains why we take $\nu_i=Id_\ddag \mu_i=Id_\dag \tilde \mu_i$ in Theorem \ref{thm:g-a equiv}.
\end{remark}

{In section \ref{sec:PDE_dual} we provide an argument for Theorems \ref{thm:g-a equiv}-\ref{thm:gBass structure irr} in the irreducible case by means of duality and PDE techniques. In section \ref{sec:proof} we provide a probabilistic proof of Theorem \ref{thm:g-a equiv}, and in section \ref{sec:gBass structure} we prove Theorem \ref{thm:gBass structure irr} as a particular case of a more general statement wherein the irreducibility assumption is dropped.}

\section{Kantorovitch duality perspective on geometric Bass martingale}\label{sec:PDE_dual}
The geometric martingale Benamou--Brenier problem \eqref{eq:gmbb}  falls into the general class of optimal transportation under controlled stochastic dynamics. The duality for such problems is well understood, see \cite{TaTo13,GuoLoeper21}, and offers a rich source of insights into their structure. To wit, recently \cite{JoLoOb23} used this approach to present a PDE perspective on the Bass martingale and, in particular, offered an alternative justification for the duality result in \cite[Thm.~1.4]{BaBeScTs23} that we used above to obtain Corollary \ref{cor:duality}. 
We apply now an analogous approach to \eqref{eq:gmbb}. 
As explained in Remark \ref{rk:mean1enough}, without any loss of generality, we can assume that $\int x\mu_0(dx)=\int x\mu_1(dx)=1$. The dual to 
\eqref{eq:gmbb} is found by considering 
\begin{align}
\label{eq:dual}
\tag{G-Dual}
\mathbf{DualGmBB}_{\mu_0,\mu_1}&= \textstyle 
\sup_{u}\left\{
\int u(1,s)\mu_1(ds) - \int u(0,s)\mu_0(ds)\right\}
\end{align}
among the solutions $u$ of 
\beq\label{eq:main log}
\textstyle 
\dt u + \frac{\bar\sigma^2}{2}\frac{s^2\dss u}{1- s^2\dss u}=0,
\enq
which satisfy $s^2\dss u < 1$. We refer to \cite[Thm.~4.2]{TaTo13} for a statement allowing to derive the above, but note that our arguments remain formal. In particular, we assume the existence and uniqueness of the dual optimiser $u$, which may be hard to establish independently but which will follow, in the irreducible case, from our proofs in sections \ref{sec:proof} and \ref{sec:gBass structure}. Once we have the optimal $u$ in \eqref{eq:dual}, the optimal $\sigma$ in \eqref{eq:gmbb} is obtained directly via 
\beq\label{eq:sigma}
\textstyle 
\sigma = \frac{\bar\sigma}{1-s^2\dss u},
\enq
hence under $\bP$ we have $
\frac{d S_t}{S_t}=\frac{\bar\sigma}{1-s^2\dss u}d W^\bP_t=\sigma d W^\bP_t$.

A suboptimal but sufficient statement for our pedagogic purpose is the following:
\begin{proposition}
Let $\mu_0$ be a probability measure with a smooth density and $u$ a $C^4$ smooth classical solution to \eqref{eq:main log}. Consider  $\sigma$ in \eqref{eq:sigma} and $\mu_1\sim S_1$, where the process $(S_t)_{0\leq t\leq 1}$ has lognormal volatility $\sigma$ and $S_0\sim \mu_0$. Then  
\[
\mathbf{DualGmBB}_{\mu_0,\mu_1}=\mathbf{GmBB}_{\mu_0,\mu_1}
\]
holds, the l.h.s.\ is attained by $u$, the r.h.s.\ is attained by $S$, and $S_t=1/F(t,W^{\tilde \bP})$, for some $F, W^{\tilde \bP}$ obtained from $u$ as explained below.
\end{proposition}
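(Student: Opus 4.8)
The plan is to verify the duality by exhibiting matching primal and dual values through a verification (Itô) argument, and then identify the optimiser via a change of measure that linearises the PDE \eqref{eq:main log}. First I would introduce the Legendre-type change of variables that turns the fully nonlinear PDE \eqref{eq:main log} into a linear (heat-type) equation. Writing $\sigma$ as in \eqref{eq:sigma}, the key algebraic observation is that $1/(1-s^2\dss u)$ is exactly the local "stretching" factor, so after passing to logarithmic spatial coordinate $x=\log s$ and an appropriate time change, the function $u$ (or rather its conjugate) should solve a backward heat equation $\dt F + \tfrac{\bar\sigma^2}{2}\dxx F=0$ up to lower-order terms; this is the PDE shadow of the probabilistic change of numeraire used in Section~\ref{sec:proof}. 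Concretely, I expect $F(t,\cdot)$ to be defined so that $s\mapsto s\,\partial_s(\text{something involving }u)$ inverts to $1/F$, matching the representation $S_t=1/F(t,W^{\tilde\bP}_t)$ claimed in the statement.

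\textbf{Key steps.} (1) \emph{Weak duality $\mathbf{DualGmBB}\le \mathbf{GmBB}$}: for any admissible $u$ solving \eqref{eq:main log} with $s^2\dss u<1$ and any feasible $S$ with volatility $\sigma^S$, apply Itô's formula to $u(t,S_t)$. The drift term is $\dt u + \tfrac12 (\sigma^S)^2 s^2\dss u$, and using $\dt u=-\tfrac{\bar\sigma^2}{2}\frac{s^2\dss u}{1-s^2\dss u}$ one checks pointwise that $-\dt u \ge \tfrac12(\sigma^S)^2 s^2\dss u - \tfrac12(\sigma^S-\bar\sigma)^2$, i.e. the drift of $u(t,S_t)$ is bounded above by $\tfrac12(\sigma^S-\bar\sigma)^2$; this is the elementary inequality $\tfrac{ab}{1-a}\ge ab' -\tfrac12(b'-?)^2$ type bound coming from the Legendre duality underlying \eqref{eq:sigma}. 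Taking expectations gives $\int u(1,\cdot)d\mu_1-\int u(0,\cdot)d\mu_0\le \bE\int_0^1\tfrac12(\sigma^S-\bar\sigma)^2dt$, hence the inequality between the two problem values (here $\bar\sigma^2$ is carried along as in the reductions to \eqref{eq:ap}, \eqref{eq:gp}; I would normalise so that constant prefactors match). (2) \emph{Equality}: take the specific $\sigma$ of \eqref{eq:sigma} associated to the given $C^4$ solution $u$, and $\mu_1$ the resulting time-1 marginal of $S$; then the inequality in step (1) becomes an equality pointwise by construction, so $u$ attains the dual value and $S$ attains the primal value. (3) \emph{Identification $S_t=1/F(t,W^{\tilde\bP}_t)$}: perform the change of measure $\vd\tilde\bP/\vd\bP$ proportional to $S_1$ (the change of numeraire), under which, by Girsanov, $W^{\tilde\bP}_t=W^\bP_t-\int_0^t\sigma_u\,\vd u$ is a Brownian motion; the SDE \eqref{eq:GeoDiff} for $\log S$ then has its drift modified in precisely the way that makes $1/S$ a conditioned (Bass-type) functional of $W^{\tilde\bP}$, giving $S_t=1/F(t,W^{\tilde\bP}_t)$ with $F(t,\cdot)=F\ast\gamma_{c(1-t)}$ for the appropriate terminal $F$ read off from $u$ via the Legendre transform in step (1). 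This mirrors Theorem~\ref{thm:gBass structure irr} with $m=1$.

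\textbf{Main obstacle.} The delicate point is step (3): making rigorous the passage from the PDE data $u$ to the pair $(F,W^{\tilde\bP})$, in particular checking that the change-of-measure density is a genuine probability density (i.e. $S$ is a true martingale, not merely a local one, so $\bE[S_1]=\bE[S_0]=1$) and that the time-changed/conjugated function $F$ is indeed the Gaussian-convolution $F\ast\gamma_{1-t}$ of an increasing terminal function, which is what links this proposition to the arithmetic Bass structure. Under the stated $C^4$ regularity and smoothness of $\mu_0$ these should follow from standard parabolic estimates and the fact that $s^2\dss u<1$ keeps $\sigma$ bounded, but the bookkeeping of the log-coordinate change, the time reparametrisation absorbing $\bar\sigma$, and the conjugation is where the real work lies; the weak-duality inequality in steps (1)–(2), by contrast, is a routine Itô verification once the right pointwise inequality is isolated.
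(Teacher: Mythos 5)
Your overall architecture is the same as the paper's: the duality is handled by exactly the verification argument you outline --- your pointwise inequality in step (1) is just the observation that \eqref{eq:main log} is the HJB equation $\dt u+\sup_{a}\big\{\tfrac{a^2s^2}{2}\dss u-\tfrac12(a-\bar\sigma)^2\big\}=0$, with the supremum attained at the $\sigma$ of \eqref{eq:sigma}, so the inequality becomes an equality along the $S$ generated by $u$ and $\mu_1:=\mathrm{Law}(S_1)$ --- and the representation $S_t=1/F(t,W^{\tP}_t)$ is obtained, as you propose, through the change of numeraire $d\tP=S_1\,d\bP$ and Girsanov. The paper itself keeps these points at a formal level (it defers the duality to \cite{TaTo13} and does not address true-martingale/integrability issues), and the factor-of-two/normalisation bookkeeping you flag is equally left implicit there, so neither counts against you; in these respects your step (1)--(2) is if anything more explicit than the text.

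Where your sketch misses the actual content of ``as explained below'' is the identification of $F$. It is not $u$, nor the conjugate of $u$ in logarithmic coordinates, that becomes harmonic. The paper's chain is: set $v(t,s)=\big(-u(t,s)-\ln s+\tfrac{\bar\sigma^2}{2}t\big)/\bar\sigma$, which satisfies $\dt v-\tfrac{1}{2s^2\dss v}=0$ and makes $\sigma=\tfrac{1}{s^2\dss v}$ independent of $\bar\sigma$; then $Y_t=\ds v(t,S_t)$ is a $\tP$-local martingale with $dY_t=\tfrac{1}{S_t}dW^{\tP}_t$, the spatial Legendre transform gives $X_t=v^*(t,Y_t)$ with $dX_t=dW^{\tP}_t$ (this is how the $\tP$-Brownian motion in the statement is actually produced), and only the spatial inverse $w=(v^*)^{-1}$ solves the heat equation $\dt w+\tfrac12\dxx w=0$; finally $F=\dx w$, so $1/S_t=F(t,W^{\tP}_t)$ with $F(t,\cdot)=F(1,\cdot)\ast\gamma_{1-t}$. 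Your guess that ``$u$ or its conjugate'' satisfies a backward heat equation after a log-coordinate and time change is false as stated, and the Legendre duality you invoke in step (1) (over the volatility variable) is not the conjugation that yields $F$; one needs both the spatial conjugation of $v$ and the additional functional inversion $w=(v^*)^{-1}$. With that chain $u\to v\to v^*\to w$, $F=\dx w$ inserted, your proposal reproduces the paper's argument; without it, step (3) does not deliver the claimed pair $(F,W^{\tP})$.
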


At this point, it may seem that the optimal $\sigma$ depends on the reference level $\bar\sigma$, where we know from the equivalence between \eqref{eq:gmbb} and \eqref{eq:gp} that this does not happen. To understand this, we continue analogously to \cite[sec.~5]{Lo18}. We let
\[d\tP=S_1 d\bP \quad \text{on }\cF_1,\]  
and consider 
\[\textstyle v(t,s)=\left(-u(t,s)  - \ln(s){+\bar\sigma^2t/2}\right)/\bar\sigma.\] 
Direct verification shows that $v$ satisfies
$
\dt v -\frac{1}{2s^2\dss v} = 0
$
and $\sigma$ is derived from $v$ as 
$
\sigma=\frac{1}{s^2\dss v}
$,
and in particular is independent of $\bar\sigma$.
Differentiating \eqref{eq:main log}, It\^o's formula and 
 Girsanov's theorem yield the following direct observations.
\begin{lemma} We have
\begin{enumerate}
\item[(i)] $S_t\ds u(t,S_t), S_t\ds v(t,S_t)$ are local martingales under $\bP$;
\item[(ii)] $\ds u(t,S_t), \ds v(t,S_t)$ are local martingales under $\tP$; 
\item[(iii)] 
$d \ds v(t,S_t)= \frac{1}{S_t}dW^\tP_t$ for a $\tP$--Brownian motion $W^\tP$.
\end{enumerate}
\end{lemma}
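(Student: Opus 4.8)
The plan is to follow the route flagged just before the statement: differentiate the log-PDE \eqref{eq:main log} once in $s$, extract (i) from It\^o's formula under $\bP$, (ii) from the Bayes rule relating $\bP$- and $\tP$-(local) martingales, and (iii) from an explicit It\^o computation under $\tP$ together with L\'evy's characterisation. For Step~1, writing \eqref{eq:main log} as $\dt u=-\tfrac{\bar\sigma^2}{2}\big((1-s^2\dss u)^{-1}-1\big)$ and differentiating in $s$, while using $\sigma=\bar\sigma\,(1-s^2\dss u)^{-1}$ from \eqref{eq:sigma}, one obtains the pointwise identity
$$\dt(\ds u)=-\sigma^2 s\,\dss u-\tfrac12\,\sigma^2 s^2\,\partial_{sss}u,$$
valid wherever $u$ is $C^1$ in $t$ and $C^3$ in $s$, so the $C^4$ hypothesis is comfortably enough. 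This is the only analytic input on $u$ used below.

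For claim (i): under $\bP$ one has $dS_t=\sigma S_t\,dW^\bP_t$ with $\sigma=\sigma(t,S_t)$. Applying It\^o's formula to $S_t\,\ds u(t,S_t)$, the finite-variation part collects the contributions of $d(\ds u(t,S_t))$ and of the cross variation $d\langle S,\ds u(\cdot,S_\cdot)\rangle$; after inserting the identity of Step~1 and the formula for $\sigma$ it reduces identically to $0$. Hence $S_t\,\ds u(t,S_t)$ is a $\bP$-local martingale, and since $\ds v=-\bar\sigma^{-1}(\ds u+1/s)$ by the definition of $v$, the same holds for $S_t\,\ds v(t,S_t)=-\bar\sigma^{-1}\big(S_t\,\ds u(t,S_t)+1\big)$.

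For claims (ii) and (iii): since $S$ is a strictly positive $\bP$-martingale with $\bE^\bP[S_1]=\bE^\bP[S_0]=1$ (recall the normalisation $\int x\,d\mu_i=1$), $d\tP=S_1\,d\bP$ is a genuine probability change on $\cF_1$ with density process $Z_t=\bE^\bP[S_1\mid\cF_t]=S_t$. The Bayes rule --- ``$N$ is a $\tP$-local martingale $\iff$ $SN$ is a $\bP$-local martingale'' --- applied with $N=\ds u(\cdot,S_\cdot)$ and $N=\ds v(\cdot,S_\cdot)$ yields (ii) from the previous step. For (iii), Girsanov gives that $W^\tP_t:=W^\bP_t-\int_0^t\sigma_s\,ds$ is a continuous $\tP$-local martingale with $\langle W^\tP\rangle_t=t$, hence a $\tP$-Brownian motion by L\'evy's characterisation, and $dS_t=\sigma S_t\,dW^\tP_t+\sigma^2 S_t\,dt$ under $\tP$. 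A direct It\^o computation --- using Step~1 to cancel drifts and $\sigma/\bar\sigma=(1-s^2\dss u)^{-1}$ --- gives $d(\ds u(t,S_t))=\sigma S_t\,\dss u\,dW^\tP_t$ and $d(1/S_t)=-\sigma S_t^{-1}\,dW^\tP_t$, whence $d(\ds v(t,S_t))=-\bar\sigma^{-1}\big(\sigma S_t\,\dss u-\sigma S_t^{-1}\big)dW^\tP_t=S_t^{-1}\,dW^\tP_t$; equivalently one may take this last relation as the definition of $W^\tP$ and check $\langle W^\tP\rangle_t=\int_0^t S_s^2\cdot S_s^{-2}\,ds=t$.

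There is no essential difficulty here. The two points needing care are the drift cancellations in the first two steps --- checking that the differentiated PDE combines with the formula for $\sigma$ exactly as claimed --- and the legitimacy of $d\tP=S_1\,d\bP$, which hinges on $S$ being a \emph{true} (not merely local) $\bP$-martingale; this is precisely where the standing smoothness/boundedness assumptions on $\mu_0$ and $u$ in the Proposition enter. All processes in (i)--(iii) are local martingales directly by construction, so no further integrability is required for the statements as phrased.
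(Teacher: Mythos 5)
Your proposal is correct and follows exactly the route the paper indicates for this lemma (which it justifies only by the one-line remark ``Differentiating \eqref{eq:main log}, It\^o's formula and L\'evy's characterisation''): differentiate the PDE in $s$, cancel drifts via It\^o and \eqref{eq:sigma} to get (i), pass to $\tP$ by the Bayes/Girsanov argument with density process $S_t$ for (ii), and compute $d\,\ds v(t,S_t)=\frac{1}{S_t}dW^\tP_t$ directly for (iii). Your drift computations and the relation $\ds v=-\bar\sigma^{-1}(\ds u+1/s)$ check out, and your remark on $S$ needing to be a true martingale is consistent with the paper's (admittedly formal) treatment.
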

The condition $s^2\dss u < 1$ implies that $v$ is strictly convex on its domain $(0,+\infty)$. Its Legendre transform, $v^*$, 
is therefore smooth, strictly convex and strictly increasing with 
$\dy v^*(t,\ds v(t,s))=s$, $s>0$, by the usual properties of Legendre transform. In particular, $v^*$ is invertible. Moreover $v^*$ satisfies
\[
\dt v^* + \dyy v^*/[2(\dy v^*)^2]=0,
\]
from the PDE satisfied by $v$. Then, letting 
\[
W_t= v^*(t,\ds v(t,S_t)),
\]
It\^o's formula gives
\[ 
d W_t = \dy v^*(t,\ds v(t,S_t))/S_t \cdot dW^\tP_t = d W^\tP_t.
\]
Therefore $W_t$ is a $\tP$--Brownian motion, and we can choose  $W^\tP_t\equiv W_t$. We let 
$
w=\left({v^*}\right)^{-1}, 
$
and note that $w$ inherits the regularity of $v$ and therefore of $u$. 
Then $w$ sends a $\tP$--Brownian motion onto a $\tP$--local martingale, 
 $ w(t,W_t)=\ds v(t,S_t)$. An application of It\^o's formula shows that $w$ must therefore satisfy the heat equation, i.e., 
$\dt w + \frac{1}{2}\dxx w = 0$. We finally have the following relationships:
\[
S_t\qquad\longleftrightarrow \qquad Y_t=\ds v(t,S_t) \qquad \longleftrightarrow \qquad W_t=v^*(t,Y_t),
\]
and, moreover,
$
S_t=\dy v^*(t,Y_t)=1/\dx w(t,W_t)
$, and we can now let $F=\dx w$.
Since $w$ solves the heat equation, so does $F=\dx w$.
This gives a fast numerical recipe for solving the HJB equation \eqref{eq:main log}.  Under $\tP$,  the process $(\frac{1}{S_t}:t\leq 1)$ is given by 
 $1/S_t = F\big(t, W_t\big)$ and hence is a $\tP$ Bass martingale, see Definition \ref{def:Bassmg} below.  
This  allows to efficiently simulate $(S_t:t\leq 1)$ and to price options, including path-dependent ones, via \eqref{eq:simulateGBass}. We explore this further in section \ref{sec:numerics} and link to recent works \cite{JoLoOb23, AcMaPa23, CoHe21} on numerics for \eqref{eq:ap}. We close this section with a summary of the main similarities and differences between the arithmetic and geometric Bass martingales. The former was denoted $(M_t)$ in \eqref{eq:ambb} but we write $(X_t)$ below keeping with notation of $x$ and $s$ for the state variables. 

%

\medskip 

\noindent
{\bf The Arithmetic Bass Martingale $(X_t:t\leq 1)$}
\\

\noindent\fbox{%
    \parbox{30pt}{%

\begin{align*}
X_t && \text{ Martingale under } \bP&& \textstyle  \vd X_t = \frac{1}{\dxx v}\vd W^\bP_t\\
Y_t=\dx v(t,X_t) && \text{ BM under } \bP&& \vd Y_t= \vd W^\bP_t\\
Z_t= v^*(t,Y_t) && \text{ Martingale under }\bP && \vd Z_t =  X_t\vd W^\bP_t \\
X_t =\dy v^*(t,Y_t) && \;v^*\text{ solves the heat equation.}
\end{align*}
}}
\\
\\
\\
{\bf The Geometric Bass Martingale $(S_t: t\leq 1)$}
\\

\noindent
\fbox{%
    \parbox{30pt}{%

\begin{align*}
S_t && \text{ Martingale under } \bP&& \textstyle  \vd S_t = \frac{1}{S_t\dss v}\vd W^\bP_t\\
Y_t=\ds v(t,S_t) && \text{ Martingale under } \tP&& \textstyle  \vd Y_t=\frac{1}{S_t} \vd W^\tP_t\\
W_t= v^*(t,Y_t) && \text{ BM under }\tP && \vd W_t = \vd W^\tP_t \\
\textstyle  \frac{1}{S_t} =  \dx w\big(t, W_t\big) && \;w=(v^*)^{-1}\text{ solves the heat equation.}
\end{align*}
}}

\section{A probabilistic proof of Theorem \ref{thm:g-a equiv} via a change of measure}
\label{sec:proof}


\begin{proof}[Proof of Theorem \ref{thm:g-a equiv}]
Let us denote $\cA^{GP}_{\mu_0,\mu_1}$ the $6$-tuples of 
$$\mathfrak S:= (\Omega, \cF, (\cF)_{t\geq 0}, \bP, (B_t)_{t\geq 0}, (S_t)_{t\in [0,1]}),$$ 
admissible for \eqref{eq:gp}. 
Likewise, the $6$-tuples $\mathfrak M:= (\Omega, \cF, (\cF_t)_{t\geq 0}, \tilde \bP, (W_t)_{t\geq 0}, (M_t)_{t\in [0,1]})$ admissible for \eqref{eq:ap} are denoted $\cA^{AP}_{\nu_0,\nu_1}$ . We denote $\mathcal L$ the operator that associates to such a 6-tuple the law of its 6-th element. 
 We say that $\alpha: \cA^{GP}_{\mu_0,\mu_1} \to \cA^{AP}_{\nu_0,\nu_1}$ is \emph{law-invariant} if $\mathcal L(\alpha(\mathfrak S^1)) = \mathcal L(\alpha(\mathfrak S^2))  $ whenever $\mathcal L(\mathfrak S^1) =\mathcal L(\mathfrak S^2) $, and in this case we simply write $\alpha(S)$, with similar notation/terminology for $\beta:  \cA^{AP}_{\nu_0,\nu_1} \to \cA^{GP}_{\mu_0,\mu_1}$.  
 
Denote $m=\int x \mu_0(dx)=\int x \mu_1(dx)$ and fix $\mathfrak S\in  \cA^{GP}_{\mu_0,\mu_1}$. 
Define $\tilde{\mathbb P}$, a probability measure on $\cF_1$, via $d\tilde{\mathbb P}:=\frac{S_1}{m}d\mathbb P$. We will use the notation $\tilde{\mathbb E}$ for expectation under this measure. {As $S$ is a non-negative martingale}, by Girsanov's theorem $\tilde{B_t}:=B_t-\int_0^t\sigma_s ds$ is a $\tilde{\mathbb P}$-Brownian motion. Note that $S_1>0$ $\bP$-a.s., and hence $\bP$ and $\tilde \bP$ are equivalent on $\cF_1$ and 
\begin{equation}\label{eq:Rmgcomp}
\textstyle 
M_1:=\frac{d\bP}{d\tilde \bP} \big|_{\cF_1}=\frac{m}{S_1}\quad \text{and}\quad M_t := \tilde{\bE}[M_1|\cF_t]=\frac{\mathbb E[\frac{S_1}{m}M_1|\mathcal F_t]}{\bE[\frac{S_1}{m}|\mathcal F_t]}=\frac{1}{\frac{S_t}{m}}=\frac{m}{S_t}, 
\end{equation}
for $t\in [0,1]$.
Hence 
\begin{equation}\label{eq:G-Apbequality}
\textstyle 
\mathbb E\left[\int_0^1\sigma_t dt \right] = \tilde{\mathbb E}\left[M_1\int_0^1\sigma_t dt\right]=\tilde{\mathbb E}\left[\int_0^1M_t\sigma_tdt\right]=\tilde{\mathbb E}\left[\int_0^1\Sigma_tdt\right], 
\end{equation}
where we defined $\Sigma_t:=M_t\sigma_t$. An application of the It\^o formula gives
\begin{align*}
    dM_t &= \textstyle  m \left[ \frac{-1}{S_t^2}S_t\sigma_tdB_t + \frac{S_t^2\sigma_t^2}{S_t^3}dt \right]  = -M_t\sigma_t dB_t + M_t\sigma^2_t dt \\
    &=-M_t\sigma_t[dB_t-\sigma_t dt]= \Sigma_t dW_t,
\end{align*}
where $W_t:=-\tilde{B_t}$ is likewise a $\tilde{\mathbb P}$-Brownian motion. 
Furthermore we observe that for any bounded, smooth test function $g$ we have
$$\textstyle \tilde{\mathbb E}[g(M_1)]=\mathbb E\left[\frac{g(M_1)}{M_1}\right] = \mathbb E\left[ g\left(\frac{m}{S_1}\right)\frac{S_1}{m}\right] = \int g\left(\frac{m}{y}\right)\frac{y}{m}\mu_1(dy)= \int gd\nu_1.$$
Similarly, using the martingale property of $S$ under $\mathbb P$, we have
$$\textstyle \tilde{\mathbb E}[g(M_0)]=\mathbb E  \left[\frac{g(M_0)}{ M_1}\right] = \mathbb E\left[ g \left(\frac{m}{S_0}\right)\frac{S_1}{m}\right ] =\mathbb E\left[ g \left(\frac{m}{S_0}\right )\frac{S_0}{m}\right] = \int gd\nu_0.$$
We conclude that $\mathfrak M:=(\Omega, \cF, (\cF_t)_{t\geq 0}, \tilde \bP, (W_t)_{t\geq 0}, (M_t)_{t\in [0,1]})\in \cA^{AP}_{\nu_0,\nu_1}$ and hence we have defined a map $\alpha: \cA^{GP}_{\mu_0,\mu_1} \to \cA^{AP}_{\nu_0,\nu_1}$ via $\alpha(\mathfrak S):=\mathfrak M$. 
In addition, \eqref{eq:G-Apbequality} holds and this implies that $\mathbf{GP}_{\mu_0,\mu_1}\leq \mathbf{AP}_{\nu_0,\nu_1}$. 
Furthermore, $\alpha$ is law-invariant, since for every suitable path-dependent test function $g$ we have
\begin{align*}
       \tilde \bE\Big[g\big(\{M_t: t\in [0,1]\}\big)\Big] &= \bE\Big[g\big(\{m/S_t:t\in[0,1]\}\big)\cdot \frac{S_1}{m}\Big].
\end{align*}

The reverse inequality is obtained in the same fashion by constructing $\beta : \cA^{AP}_{\nu_0,\nu_1}\to \cA^{GP}_{\mu_0,\mu_1}$. 
 Consider $\mathfrak M = (\Omega, \cF, (\cF_t)_{t\geq 0}, \tilde \bP, (W_t)_{t\geq 0}, (M_t)_{t\in [0,1]}) \in \cA^{AP}_{\nu_0,\nu_1}$, where $W$ is a $\tilde \bP$--Brownian motion,  $dM_t=\Sigma_t dW_t$ and $M_0\sim \nu_0$, $M_1\sim \nu_1$. Observe that $M_t>0$ a.s., $t\in [0,1]$ and let $\sigma_t:=\frac{\Sigma_t}{M_t}$. Define a new probability measure $\bP$ via $\frac{d\bP}{d\tilde\bP}\big|_{\cF_1}=M_1$, then $B_t := - W_t + \int_0^t \sigma_s ds $ is a $\bP$-Brownian motion. We let $S_t = \frac{m}{M_t}$ and by a direct computation, analogous to \eqref{eq:Rmgcomp}, we see that $S$ is a $\bP$-martingale and It\^o's formula gives $dS_t=S_t\sigma_t dB_t$. Finally, for a test function $g\geq 0$ we have
$$\textstyle \bE[g(S_1)]=\tilde \bE \left[g\left(\frac{m}{M_1}\right) M_1\right] = \int g\left(\frac{m}{y}\right)y\nu_1(dy) = \int g(y)\mu_1(dy)$$
using $\nu_1=Id_\ddag\mu_1$, and likewise $S_0\sim \mu_0$, so that 
$\beta(\mathfrak M):= (\Omega, \cF, (\cF_t)_{t\geq 0}, \bP, (B_t)_{t\geq 0},$ $ (S_t)_{t\in [0,1]}) \in \cA^{GP}_{\mu_0,\mu_1}$, as desired. 
 The equality of values \eqref{eq:G-Apbequality} still holds and we conclude that $\mathbf{GP}_{\mu_0,\mu_1}\geq \mathbf{AP}_{\nu_0,\nu_1}$, showing the two values are actually equal. 
 Further, for every suitable path-dependent test function $g$, we have
\begin{align*}
   \bE\Big[g\big(\{S_t: t\in [0,1]\}\big)\Big] &= \tilde \bE\Big[g\big(\{m/M_t:t\in[0,1]\}\big)\cdot M_1\Big],
\end{align*}
and hence $\beta$ is law-invariant. 
In addition, it follows directly by the above explicit constructions that $\beta(\alpha(\mathfrak S))=\mathfrak S$ and $\alpha(\beta(\mathfrak M))=\mathfrak M$.
 The equality of value functions \eqref{eq:G-Apbequality} implies that $\mathfrak S$ is an optimiser for \eqref{eq:gp} if and only if $\mathfrak M=\alpha(\mathfrak S)$ is an optimiser for \eqref{eq:ap}. Existence and uniqueness (in distribution) of the optimiser of \eqref{eq:ap} was established in \cite{BaBeHuKa20} and thus it immediately carries over to the geometric setting in \eqref{eq:gp}.
\end{proof}
 
\begin{remark}
From the proof of Theorem \ref{thm:g-a equiv} it is clear that several immediate generalisations are possible, linking arithmetic and geometric problems. Specifically, in analogy to \eqref{eq:G-Apbequality}, with $\mathfrak M=\alpha(\mathfrak S)$, we can write 
\begin{align*} \textstyle 
\mathbb E\left[\int_0^1c(t,S_t,\sigma_t^2) dt \right] = \tilde{\mathbb E}\left[\int_0^1M_tc(t,S_t,\sigma_t^2) dt\right]=\tilde{\mathbb E}\left[\int_0^1M_tc\left(t,\frac{m}{M_t},\frac{\Sigma_t^2}{M_t^2}\right) dt\right]. 
\end{align*}
Hence to $c:[0,1]\times \bR_+\times \overline{\bR_+} \to \bR$ we can associate $\tilde{c}(t,x,z):=xc(t,m/x,z/x^2)$, thus obtaining the equivalence of the 
geometric and arithmetic problem, respectively
\begin{align*} \textstyle 
\inf_{\substack{S_0\sim \mu_0, S_1\sim \mu_1\\ S_t=S_0+\int_0^t \sigma_s S_s dB_s}} 
\bE \left[\int_0^1c(t,S_t,\sigma_t^2)dt\right] &
\ \& \ \inf_{\substack{M_0\sim \nu_0, M_1\sim \nu_1\\ M_t=M_0+\int_0^t \Sigma_s dB_s}} 
\bE \left[\int_0^1\tilde c(t,M_t,\Sigma_t^2)dt\right],
\end{align*}
with $\nu_i=Id_{\ddag}\mu_i$, $i=0,1$. In this article we took $c(t,x,\sigma^2)=-\sigma$ as this is essentially the only case where the arithmetic version of the problem has an explicit solution.
\end{remark} 

\section{Structure of the geometric Bass martingale}
\label{sec:gBass structure}
We now turn to a detailed study and characterisation of the optimiser in \eqref{eq:gp}. Our proof in section \ref{sec:proof} shows that the optimiser $S$ to \eqref{eq:gp} is obtained by a change of measure procedure starting with the optimiser $M$ to \eqref{eq:ap}, which was dubbed \emph{stretched Brownian motion} and shown to be unique in \cite{BaBeHuKa20}. We recall a crucial concept towards understanding the structure of $M$.

\begin{definition}\label{def:Bassmg}
   A real-valued martingale $M$ is a Bass martingale, if there is an increasing function $F$ and a Brownian motion $B$ with a possibly non-trivial initial distribution such that 
    $$M_t=F(t,B_t),$$
    with $F(t,\cdot)=F*\gamma_{1-t}(\cdot)$. We refer to $F$ as the generating function and to $\alpha:=\text{Law}(B_0)$ as the Bass measure.
\end{definition}

Following \cite[Thm.~3.1]{BaBeHuKa20} or \cite[Thm.~1.3]{BaBeScTs23}, we know that the unique optimiser of \eqref{eq:ap} has a very specific structure, namely: conditionally on it starting in $I\in \mathcal I_{[\nu_0,\nu_1]}$, it is a Bass martingale. In other words, for any $I\in \mathcal I_{[\nu_0,\nu_1]}$, there exists $\alpha_I$ a probability measure and $F_{I}:\mathbb R\to \overline I$ increasing, such that, conditionally on $M_0\in I$, we have $M_t= F_{I}(t,W^I_t)$, with $F_{I}(t,x)=F_{I}\ast \gamma_{1-t}(x) $ and $W^I$ being a Brownian motion started according to $\alpha_I$. To justify this structure, note that in \eqref{eq:ap}, we have $\bE[\int_0^1 \Sigma_t dt] = \bE[M_1(B_1-B_0)]$ and by the usual transport results we expect $M_1$, conditionally on $B_0$, to be an increasing function of $B_1$.

To understand better the dynamics of the optimiser $S$ in \eqref{eq:gp}, we consider a representation for the optimiser $M$ in \eqref{eq:ap} where the Brownian motions $W^I$ are coupled together.
Let $(I_i)_{i\geq 1}$ be a numbering of intervals in $\mathcal I_{[\nu_0,\nu_1]}$ and $I_0:=\{z: U_{\nu_0}(z)=U_{\nu_1}(z)\}$, and write $F_{i}=F_{I_i}$, $\alpha_i=\alpha_{I_i}$. We consider a filtered probability space $(\Omega, \cF, (\cF_t)_{t\geq 0}, \tilde \bP)$ with a standard Brownian motion $\tilde B$ and $\cF_0$ rich enough to support 
an integer-valued random variable $\xi$ with $\tilde \bP(\xi=i)=\nu_0(I_i)$, $i\geq 0$, and $\zeta_0\sim {\nu_0}_{|I_0}/\nu_0(I_0)$, $\zeta_i\sim \alpha_i$, $i\geq 1$, all of these being $\cF_0$-measurable, {independent of each other}, and independent of the Brownian motion $\tilde B$. Let $W_0=\sum_{i\geq 1} \zeta_i \mathbf{1}_{\xi=i} $ and $W_t=W_0+\tilde B_t$. Let $M_1=\zeta_0\mathbf{1}_{\xi=0} + \sum_{i\geq 1} F_{i}(W_1)\mathbf{1}_{\xi=i}$ and $M_t=\tilde \bE[M_1|\cF_t]$. It follows that $M_t=\zeta_0\mathbf{1}_{\xi=0} + \sum_{i\geq 1} F_{i}(t,W_t)\mathbf{1}_{\xi=i}$.
We therefore have 
$$\textstyle  dM_t=M_t\left(\sum_{i\geq 1}\frac{\partial_x F_{i}(t,W_t)}{M_t}\mathbf{1}_{\xi=i}\right)dW_t = M_t \sigma_t dW_t,$$
where $\sigma_t := \sum_{i\geq 1}\partial_x \log(F_{i}(t,W_t))\mathbf{1}_{\xi=i} $. 
 Using the notation of the proof of Theorem \ref{thm:g-a equiv}, $\mathfrak S=\beta(\mathfrak M)$ is an optimiser in \eqref{eq:gp} and 
for any measurable test functional $g:C([0,1];\bR)\to\bR_+$ we have
$$
    \bE\Big[g\big(\{S_t: t\in [0,1]\}\big)\Big]=\tilde \bE\Big[g\big(\{m/M_t:t\in[0,1]\}\big)\cdot M_1\Big],$$
which fully characterises the distribution of $S$. 
It reduces to \eqref{eq:simulateGBass} in the irreducible case and thus establishes Theorem \ref{thm:gBass structure irr}. Note that the marginals of $S$ are recovered from the marginals of $M$ via $\bE[g(S_t)] = \tilde \bE[g(m/M_t)M_t]$, i.e., 
$$\textstyle S_t \sim \left(y \to \frac{y}{m}\right)_\dag \nu_t,
$$
where $\nu_t\sim M_t$. In the general, not necessarily irreducible setting, we note that for $J(I)\in \mathcal I_{[\mu_0,\mu_1]}$ associated to $I\in \mathcal I_{[\nu_0,\nu_1]}$ according to the bijection from Lemma \ref{lem:bijection}, we have $\{M_0\in I\}= \{S_0\in J(I)\}$. Importantly, $F_{i}(t,\cdot)$ is a smooth and strictly increasing function which admits an inverse, {for $t<1$}. On the set $\{S_0\in J(I_i)\}$, we have  $W_t=F_{i}^{-1}(t,m/S_t)$ and 
\begin{align*}
S_t &= \textstyle  S_0\exp\left\{-\int_0^t \partial_x \log F_i(s,W_s)dW_s + \frac{1}{2}\int_0^t  (\partial_x \log F_i(s,W_s))^2 ds\right\}\\
&=\textstyle  S_0 \exp \left\{\int_0^t \partial_x \log F_i(s,W_s)dB_s - \frac{1}{2}\int_0^t  (\partial_x \log F_i(s,W_s))^2 ds\right\}\\
&= \textstyle  S_0 \exp \left\{\int_0^t \frac{S_s}{m\partial_x F_i^{-1}(s,\frac{m}{S_s})}dB_s - \frac{1}{2}\int_0^t  \left(\frac{S_s}{m\partial_x F_i^{-1}(s,\frac{m}{S_s})}\right)^2 ds\right\}.
\end{align*}
We note that the last representation offers an intrinsic characterisation of the dynamics of $S$ under $\bP$ without the need to consider any dynamics under $\tilde \bP$. This is summarised in the following proposition.
\begin{proposition}\label{prop:gBass structure}
    Let $\mathfrak S\in \cA^{GP}_{\mu_0,\mu_1}$ be an optimiser in   \eqref{eq:gp}. Then, conditioned on $\{S_0\in J\}$, with $J\in \mathcal I_{[\mu_0,\mu_1]}$ corresponding to $I \in \mathcal I_{[\nu_0,\nu_1]}$, $S$ solves
$$ \textstyle  dS_t = S_t \frac{S_t/m}{\partial_x F_I^{-1}(t,\frac{m}{S_t})}dB_t,\quad 0<t<1.$$
\end{proposition}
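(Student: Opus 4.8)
The proof is the chain of equalities displayed immediately before the proposition, and the plan is simply to organise it and flag the regularity facts that make it rigorous. Recall from \cite[Thm.~3.1]{BaBeHuKa20} (equivalently \cite[Thm.~1.3]{BaBeScTs23}) that the optimiser $R$ of \eqref{eq:ap} is, conditionally on $\{R_0\in I\}$ for $I\in\mathcal I_{[\nu_0,\nu_1]}$, a Bass martingale: $R_t=F_I(t,W_t)$ with $F_I(t,\cdot)=F_I\ast\gamma_{1-t}$ and $W$ a $\tilde\bP$-Brownian motion started from $\alpha_I$; moreover the proof of Theorem \ref{thm:g-a equiv} shows that any optimiser $S$ of \eqref{eq:gp} is of the form $S_t=m/R_t$ after the change of measure $d\bP/d\tilde\bP|_{\cF_1}=R_1$, with $dS_t=S_t\sigma_t\,dB_t$ for the $\bP$-Brownian motion $B_t=-W_t+\int_0^t\sigma_s\,ds$ and $\sigma_t=\partial_x\log F_I(t,W_t)$ on $\{R_0\in I\}$.

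First I would identify the conditioning event: by Lemma \ref{lem:bijection} and \eqref{eq:components equivalence}, $\{R_0\in I\}=\{S_0\in J\}$ with $J=\{m/x:x\in I\}\in\mathcal I_{[\mu_0,\mu_1]}$, and since $F_I(t,\cdot)$ maps $\bR$ onto the open interval $I$ for $t<1$ we also get $S_t=m/F_I(t,W_t)\in J$ for all $t\in[0,1)$, so the SDE genuinely lives on $J$. Next, because $F_I$ is a non-constant non-decreasing function (its range lies in the non-degenerate interval $\overline I$), the convolution $F_I(t,\cdot)=F_I\ast\gamma_{1-t}$ is $C^\infty$ and strictly increasing on $\bR$ for every $t\in[0,1)$, with strictly positive derivative; hence it admits a $C^\infty$ inverse $F_I^{-1}(t,\cdot)$ on $I$, and inverting $R_t=F_I(t,W_t)=m/S_t$ yields the key substitution $W_t=F_I^{-1}(t,m/S_t)$, valid for $0\le t<1$.

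It then remains to rewrite $\sigma_t$ intrinsically. By the inverse function theorem, $\partial_x F_I(t,W_t)=\big(\partial_x F_I^{-1}(t,F_I(t,W_t))\big)^{-1}=\big(\partial_x F_I^{-1}(t,m/S_t)\big)^{-1}$, so
\[
\sigma_t=\frac{\partial_x F_I(t,W_t)}{F_I(t,W_t)}=\frac{S_t/m}{\partial_x F_I^{-1}(t,m/S_t)},
\]
and substituting into $dS_t=S_t\sigma_t\,dB_t$ gives exactly the asserted equation on $(0,1)$.

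The only points that need genuine care are the regularity claims: that $F_I(t,\cdot)$ is smooth and strictly increasing \emph{uniformly on compact subintervals of $[0,1)$} — so that $s\mapsto(s/m)/\partial_x F_I^{-1}(t,m/s)$ is a well-defined, locally bounded coefficient on $J$ and the stochastic integral is well posed away from $t=1$ — and that $F_I^{-1}(t,\cdot)$ is indeed the genuine inverse of the restriction of $F_I(t,\cdot)$ to $\bR\to I$. Both follow from the elementary smoothing properties of Gaussian convolution, but they should be recorded explicitly. I would also remark that the proposition asserts only that the optimiser \emph{solves} this SDE — an intrinsic description of its dynamics under $\bP$ — and not pathwise uniqueness for the SDE itself; uniqueness in law of the optimiser is already provided by Theorem \ref{thm:g-a equiv}.
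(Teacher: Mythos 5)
Your proposal is correct and follows essentially the same route as the paper: it invokes the Bass-martingale structure of the optimiser $R$ of \eqref{eq:ap}, uses the change-of-measure identification $S_t=m/R_t$ with $dS_t=S_t\sigma_t\,dB_t$ from the proof of Theorem \ref{thm:g-a equiv}, substitutes $W_t=F_I^{-1}(t,m/S_t)$ and applies the inverse function theorem to rewrite $\sigma_t$ intrinsically. The paper arrives at the same SDE by writing out the stochastic exponential of $S$ explicitly, which is only a cosmetic difference; your explicit attention to the smoothness and strict monotonicity of $F_I(t,\cdot)$ for $t<1$ is a welcome addition.
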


\section{Relations between classes of Bass martingales}

As observed in \cite[Remark 1.9]{BaBeHuKa20}, the solution to \eqref{eq:ap} for lognormal marginals is given by the usual geometric Brownian motion which, trivially, is also the solution to \eqref{eq:gp} for lognormal marginals. So for lognormal marginals arithmetic Bass and geometric Bass martingales coincide and are equal to the geometric Brownian motion. This is clear from the following simple computation: let $\mu_0=\delta_{1}$ and $\mu_1$ be the distribution of $S_1$ for a  geometric Brownian motion $S$ solving $dS_t = \bar\sigma S_t dB_t$. Then, for a test function $g$ and $\nu_1=Id_\ddag \mu_1$ we have 
\begin{align*}
\textstyle \int g(y)\nu_1(dy) &= \textstyle \int g\left(\frac{1}{y}\right )y\mu_1(dy) = \int g(e^{-z})e^z e^{\frac{-(z+\bar\sigma^2/2)^2}{2\bar\sigma^2}}\frac{dz}{\bar\sigma\sqrt{2\pi}}\\
&=\textstyle  \int g(e^{-z})e^{\frac{-(z-\bar\sigma^2/2)^2}{2\bar\sigma^2}}\frac{dz}{\bar\sigma\sqrt{2\pi}} = \int g(e^{z})e^{\frac{-(z+\bar\sigma^2/2)^2}{2\bar\sigma^2}}\frac{dz}{\bar\sigma\sqrt{2\pi}}\\
&=\textstyle  \int g(y)\mu_1(dy),
\end{align*}
so $\mu_1 = Id_{\ddag}\mu_1$ is a fixed point for the $\ddag$ operator. We now show that in fact the geometric Brownian motion, with an arbitrary starting distribution, is the only process which is both an arithmetic and a geometric Bass martingale. 
\begin{proposition}
 Let $\mu_0,\mu_1\in \cP_2(\bR_+)$ in convex order $\mu_0\preccurlyeq_{cx} \mu_1$ and irreducible. Then the optimisers in \eqref{eq:ap} and \eqref{eq:gp} coincide if and only if for some $\sigma^2>0$:
 $$\log_\#{\mu_1} = \left(\log_\#{\mu_0}\right) \ast \cN(-\sigma^2/2,\sigma^2).$$
\end{proposition}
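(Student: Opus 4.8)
The plan is to show that, in either direction, the common optimiser is forced to be a geometric Brownian motion issued from $\mu_0$, and conversely that geometric Brownian motion from $\mu_0$ solves both problems precisely when the stated convolution identity holds. I will repeatedly use that the optimiser of \eqref{eq:ap} for an irreducible pair exists, is unique in law, and is its Bass martingale, by \cite[Thm.~3.1]{BaBeHuKa20} and \cite[Thm.~1.3]{BaBeScTs23}, and that $(\nu_0,\nu_1):=(Id_{\ddag}\mu_0,Id_{\ddag}\mu_1)$ is irreducible whenever $(\mu_0,\mu_1)$ is, by Lemma~\ref{lem:bijection}.

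\emph{Sufficiency.} Suppose $\log_\#\mu_1=\log_\#\mu_0\ast\cN(-\sigma^2/2,\sigma^2)$ and put $S_t:=S_0\,e^{\sigma B_t-\sigma^2 t/2}$, with $B$ a Brownian motion started at $0$ and $S_0\sim\mu_0$ independent of $B$; then $S_1\sim\mu_1$ and the log-volatility of $S$ is the constant $\sigma$. For \eqref{eq:gp}: every feasible $S'$ has $\bE[\int_0^1(\sigma'_t)^2\,dt]$ equal to the constant $C:=2\int\log x\,\mu_0(dx)-2\int\log x\,\mu_1(dx)$, so Cauchy--Schwarz in $t$ and Jensen give $\mathbf{GP}_{\mu_0,\mu_1}\le C^{1/2}$; the identity $\bE\log S_1=\bE\log S_0-\sigma^2/2$ forces $\sigma=C^{1/2}$, so $S$ attains the bound and is the \eqref{eq:gp}-optimiser. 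For \eqref{eq:ap}: writing $S_t=g(t,\beta_t)$ with $\beta_t:=B_t+\sigma^{-1}\log S_0$ and $g(t,x):=e^{\sigma x-\sigma^2 t/2}$, a direct check gives $g(t,\cdot)=g(1,\cdot)\ast\gamma_{1-t}$ with $g(t,\cdot)$ increasing, so $S$ is a Bass martingale for $(\mu_0,\mu_1)$, hence the \eqref{eq:ap}-optimiser. The two optimisers therefore coincide.

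\emph{Necessity.} Suppose the optimisers of \eqref{eq:ap} and \eqref{eq:gp} coincide and realise the common process $X$ on $(\Omega,\cF,(\cF_t),\bP)$. As the \eqref{eq:ap}-optimiser, $X$ is a Bass martingale: $X_t=g(t,B_t)$ with $B$ a $\bP$-Brownian motion, $g$ increasing, $g(t,\cdot)=g(1,\cdot)\ast\gamma_{1-t}$, so $\dt g+\frac12\dxx g=0$ and $dX_t=\dx g(t,B_t)\,dB_t$. As the \eqref{eq:gp}-optimiser, the change of measure of Section~\ref{sec:proof} applies: with $d\tilde\bP:=(X_1/m)\,d\bP$ and $R_t:=m/X_t$, Theorem~\ref{thm:g-a equiv} identifies $R$ as the \eqref{eq:ap}-optimiser for $(\nu_0,\nu_1)$, hence a Bass martingale under $\tilde\bP$: $R_t=h(t,\tilde W_t)$ with $h$ increasing harmonic, $h(t,\cdot)=h(1,\cdot)\ast\gamma_{1-t}$, and $\tilde W_t:=h(t,\cdot)^{-1}(R_t)$ a $\tilde\bP$-Brownian motion (it is $\cF_t$-adapted with quadratic variation $t$, and a $\tilde\bP$-local martingale since $R$, having the law of the canonical Bass martingale, solves the same SDE). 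By Girsanov, $\tilde B_t:=B_t-\int_0^t\dx\log g(s,B_s)\,ds$ is a $\tilde\bP$-Brownian motion. Since $g(t,\cdot),h(t,\cdot)$ are smooth and strictly increasing for $t<1$, the identity $h(t,\tilde W_t)=m/g(t,B_t)$ gives $\tilde W_t=\psi(t,B_t)$ with $\psi(t,\cdot):=h(t,\cdot)^{-1}(m/g(t,\cdot))$ smooth and strictly decreasing. Applying It\^o's formula to $\psi(t,B_t)$ under $\tilde\bP$: the condition $\langle\tilde W\rangle_t=t$ forces $\dx\psi\equiv-1$, whence $\psi(t,x)=-x+c(t)$; and the vanishing of the drift of $\tilde W$ then forces $\dx\log g(t,\cdot)\equiv c'(t)$, i.e.\ $\dx\log g$ is independent of $x$ (using that $B_t$ has full support for $t>0$). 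Inserting $g(t,x)=e^{c'(t)x+d(t)}$ into $\dt g+\frac12\dxx g=0$ gives $c''\equiv0$ and $d'\equiv-\frac12 c'^2$, so $g(t,x)=e^{\sigma x+d_0-\sigma^2 t/2}$ for constants $d_0$ and $\sigma>0$ (positivity of $\dx g$, and $\sigma\ne0$ since $\mu_0\ne\mu_1$ by irreducibility). Thus $X_t=e^{d_0}e^{\sigma B_t-\sigma^2 t/2}$ is a geometric Brownian motion, and from $\log X_1=\log X_0+\sigma(B_1-B_0)-\sigma^2/2$ with $X_0\sim\mu_0$, $X_1\sim\mu_1$, we conclude $\log_\#\mu_1=\log_\#\mu_0\ast\cN(-\sigma^2/2,\sigma^2)$.

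\emph{Main obstacle.} The subtle steps are in the necessity part: (i) passing from ``$R$ has the law of the arithmetic Bass martingale for $(\nu_0,\nu_1)$'' to the pathwise statement $R_t=h(t,\tilde W_t)$ with $\tilde W$ a Brownian motion \emph{on the original filtration}, which uses that the quadratic variation and diffusion coefficient of a continuous martingale are law-invariants, forcing $R$ to solve the canonical Bass SDE; and (ii) the rigidity that extracts first $\psi(t,x)=-x+c(t)$ and then $\dx\log g(t,\cdot)\equiv c'(t)$ from the sole requirement that $\psi(t,B_t)$ be a $\tilde\bP$-Brownian motion — this is the precise place where the Bass structure of $X$ and that of $m/X$ are genuinely played against each other. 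Once $\dx\log g$ is shown to be $x$-independent, the backward heat equation pins down $g$ and the conclusion follows; the only additional care needed is the standing integrability (finiteness of the log-moments of $\mu_0,\mu_1$) making all of the above well defined.
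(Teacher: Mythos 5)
Your proof is correct and follows essentially the same route as the paper: sufficiency by exhibiting the geometric Brownian motion as a Bass martingale (hence the \eqref{eq:ap}-optimiser) that attains the constant-volatility bound in the geometric problem, and necessity by playing the $\bP$-Bass representation $X_t=g(t,B_t)$ against the $\tilde\bP$-Bass representation of $m/X_t$ obtained through the change of measure of Section \ref{sec:proof}, forcing an affine relation between the two Brownian motions and then using the heat equation to conclude that $g$ is exponential. Your derivation of the affine relation via the quadratic-variation and drift conditions on $\psi(t,B_t)$, rather than the paper's equating of $dW_t$ terms followed by the two heat equations, is only a cosmetic rearrangement of the same computation (and you are somewhat more careful about realising the Bass representations on the given filtration, a point the paper leaves implicit).
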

\begin{proof}
For sufficiency, let $\alpha = (y\to \frac{1}{\sigma}\log(y))_\# \mu_0$ and 
\begin{equation*}
S_t=\exp\left\{\sigma B_0 + \sigma (B_t-B_0)-\sigma^2t/2\right\},\quad t\in [0,1].
\end{equation*}
Then $S$ is a Bass martingale with $S_0\sim \mu_0$ and $S_1\sim \mu_1$, hence it attains $\mathbf{AP}_{\mu_0,\mu_1}$. Equally, since \eqref{eq:gmbb} and \eqref{eq:gp} are equivalent, the optimiser is the same for any choice of $\bar\sigma$ in \eqref{eq:gmbb}, but for $\bar\sigma=\sigma$, the process $S$ attains value zero, which is clearly the lower bound, and hence is the optimiser. 

For the converse implication, suppose $S$ is the optimiser in \eqref{eq:gp} and a Bass martingale. We have $S_t=H(t,B_t)$ for some function $H$ and a Brownian motion $B$. By the proof of Theorem \ref{thm:g-a equiv}, $S_t=\frac{m}{M_t}=\frac{m}{F(t,W_t)}$, for the $\tilde\bP$-Brownian motion $W$, since $M$ is a Bass martingale under $\tilde\bP$. It follows that, noting that both $H$ and $F$ are smooth for $0<t<1$ and strictly increasing in the spatial argument, 
$$\textstyle H^{-1}\left(t,\frac{m}{F(t,W_t)}\right) = B_t = -W_t + \int_0^t \partial_x \log F(s,W_s) ds$$
and hence, comparing the dynamics and equating the $dW_t$ terms,
$$\textstyle  \partial_x H^{-1}\left(t,\frac{m}{F(t,W_t)}\right) = -1\ d\tilde\bP-a.s.,\quad \Longrightarrow \frac{m}{F(t,x)} = H(t,-x+at),$$
for some constant $a$. Recall that both $H$ and $F$ solve the heat equation, so
\begin{align*}
 0 &=\textstyle  \partial_t F + \half \partial_{xx} F = -\frac{m}{H^2}\left(\partial_t H + a\partial_x H\right) + \frac{m}{2}\frac{-H^2\partial_{xx}H+ 2H(\partial_x H)^2}{H^4}\\
 & \textstyle = \frac{m}{H^2}\left(-\partial_t H - a\partial_x H -\half \partial_{xx} H + \frac{(\partial_x H)^2}{H}\right) =  \frac{m}{H^2}\left( - a\partial_x H + \frac{(\partial_x H)^2}{H}\right) 
\end{align*}
from which we deduce that  $\partial_x H  = aH$ and hence $H(t,x)=q(t)e^{ax}$, for some function $q$. Plugging into the heat equation we obtain $H(t,x)=e^{ax-a^2/2 t}$ as required. 
\end{proof}

To end this section, we discuss the relation between the martingale Benamou--Brenier problems and projections using the adapted Wasserstein distance. The latter, also known as the (bi)causal Wasserstein distance has been shown to be the natural analogue of the classical Wasserstein distance when measuring the distance between the distributions of two stochastic processes, see \cite{La18,AcBaZa20}. We focus here on the distance $\mathcal{AW}_2$ between martingale laws arising from measuring distances between paths $\omega,\omega'$ via $(\omega_0-\omega'_0)^2 + \langle \omega-\omega'\rangle_1$. Repeating the arguments used to show the equivalence between \eqref{eq:ambb} and \eqref{eq:ap}, and establishing suitable representation properties for bicausal couplings so that we can identify martingales with their distributions, \cite[Sec.~6]{BaBeHuKa20} show that \eqref{eq:ambb} is equivalent to $\mathcal{AW}_2^2$-projecting the Wiener measure on the set $\cM(\nu_0,\nu_1)$ of the distributions of continuous martingales with marginals $\nu_i$ at times $i=0,1$. At first, one might expect that the geometric Bass martingale in \eqref{eq:gmbb} arises as the $\mathcal{AW}_2^2$-projection of the geometric Brownian motion. This is not true. In fact, the resulting process is a continuous extension of the \emph{q}-Bass martingale recently introduced by \cite{Ts24}.

To see this, we consider a slightly more general setup. Let $M$ be an arithmetic Bass martingale with $M_1=F(B_1)$, where $(B_t)_{t\geq 0}$ is a standard Brownian motion on some stochastic basis. For simplicity, we assume $B_0$ is a constant and hence, also $M_0=F\ast \gamma_1(B_0)$ is a constant. Then, the problem of projecting $M$ onto $\cM(\mu_0,\mu_1)$ in the $\mathcal{AW}_2^2$-sense is equivalent to the problem 
$$\sup_{S\in \cM(\mu_0,\mu_1)}\bE[S_1F(B_1)],$$
where we continue to use the martingale and its distribution interchangeably for simplicity (and $B$ is supposed to be a Brownian motion in the same filtration where $S $ is a martingale). This amounts to a static weak optimal transport problem in the sense of \cite{GoRoSaTe14} which we can write as 
$$\sup_{\pi} \int MC(\pi^x,q)d\mu_0(x),$$
where now we optimize over one-period martingale couplings with the given marginals $\mu_0,\mu_1$ and where $q:=Law(M_1)$. To be precise, this static problem gives an upper bound to the above continuous problem. Its solution was recently studied in \cite{Ts24}, under the name of \emph{q}-Bass martingales. Assuming $q$ does not charge points, this problem admits a unique optimiser. While its full characterisation is, to the best of our knowledge, an open problem, in some situations \cite{Ts24} shows that the optimiser may be written as $S_1=G(\xi+F(B_1)),$ for an $\cF_0$-measurable random variable $\xi$ independent of $B$ and $G$ an increasing function. 
It then follows that $(G \ast q)_\# Law(\xi)= \mu_0$ and in our setting this means we can extend this one-period solution to a continuous martingale setting via
$$S_t=\bE[S_1|\cF_t] = \int G(\xi+ F(B_t+z))d\gamma_{1-t}(z) := G_t(\xi,B_t). $$
Note that $S$ has an absolutely continuous quadratic variation, and the above construction provides a bicausal coupling between $S$ and $M$ (recall that $M_t = F\ast \gamma_{1-t}(B_t)$), so the martingale $S$ saturates the upper bound, and is the optimiser to our projection problem. In particular, when $M=B$ we recover that $S$ is the \emph{arithmetic} Bass martingale. However, when $M$ is the geometric Brownian motion, or some other Bass martingale, the resulting projection appears to be a (continuous) \emph{q}-Bass martingale. We believe this provides a natural motivation to study these processes further and to understand better the $\mathcal{AW}_2^2$-projection relations between different types of Bass martingales. We leave this topic for future research.

\section{ Martingale  Sinkhorn systems}
\label{sec:numerics}
We work in the setting of Theorem \ref{thm:g-a equiv} and further w.l.o.g.\ assume
\beq\label{eq:defS0} \textstyle 
1=\int sd\mu_0(s)= \int sd\mu_1(s).
\enq 
For simplicity, we suppose $\mu_0,\mu_1$ are irreducible (otherwise our analysis has to be repeated for each irreducible component), and admit densities. For simplicity of notation, we identify the measure with its density. 

The unique Bass martingale which solves \eqref{eq:ambb} is characterised by $\alpha, F$ s.t.
\\
\noindent\fbox{%
    \parbox{410pt}{%
\beq\label{eq:Schro}
\nu_0 &=& (\gamma_1*F)_\# \alpha,\nonumber\\
\nu_1 &=& F_{\#}(\gamma_1*\alpha),\label{eq:MMPS}\\
\nu_t &=& (\gamma_{1-t}*F)_\#(\gamma_t*\alpha).\nonumber
\enq
}}
\\
The above system has been introduced in \cite{JoLoOb23} and referred to as the martingale Sinkhorn algorithm thanks to structural parallels with the classical Sinkhorn algorithm. It is equivalent to the fixed point characterisation in \cite{CoHe21}. Either formulation readily yields an algorithm to compute $\alpha,F$, see \cite{AcMaPa23, JoLoOb23} for a proof of its convergence.

In order to solve \eqref{eq:gp}, we may compute $\nu_i=Id_\dag \mu_i$, $i=0,1$, and solve the system \eqref{eq:MMPS} for $\nu_0, \nu_1$. This will yield $F, \alpha$, and for any time $t\in [0,1]$ we have
\[ \textstyle 
s\mu_t(s)=\left(\frac{1}{\gamma_{1-t}*F}\right)_\#(\gamma_t*\alpha).
\]

\bibliographystyle{siam}
\bibliography{joint_biblio}


\end{document}